\newtheorem{theorem}{Theorem}[section]
\newtheorem{definition}[theorem]{Definition}
\newtheorem{proposition}[theorem]{Proposition}
\newtheorem{conjecture}[theorem]{Conjecture}
\begin{document}

\title{Modelling questions for quantum permutations}

\author{Teodor Banica}
\address{T.B.: Department of Mathematics, University of Cergy-Pontoise, F-95000 Cergy-Pontoise, France. {\tt teo.banica@gmail.com}}

\author{Amaury Freslon}
\address{A.F.: Department of Mathematics, Paris-Sud University, F-91405 Orsay Cedex, France. {\tt amaury.freslon@math.u-psud.fr}}

\begin{abstract}
Given a quantum permutation group $G\subset S_N^+$, with orbits having the same size $K$, we construct a universal matrix model $\pi:C(G)\to M_K(C(X))$, having the property that the images of the standard coordinates $u_{ij}\in C(G)$ are projections of rank $\leq 1$. Our conjecture is that this model is inner faithful under suitable algebraic assumptions, and is in addition stationary under suitable analytic assumptions. We prove this conjecture for the classical groups, and for several key families of group duals.
\end{abstract}

\subjclass[2010]{46L54 (81R50)}
\keywords{Quantum permutation, Matrix model}

\maketitle

\section*{Introduction}

The present paper is motivated by some questions in quantum algebra. Wang introduced in \cite{wa2} the free analogue $S_N^+$ of the quantum permutation group $S_N$. While many things are known about $S_N^+$, the analytic structure of the algebra $C(S_N^+)$ is still subject to investigation. One key open problem, slightly stronger than the Connes embedding property, is whether $C(S_N^+)$ has an inner faithful matrix model. See \cite{bfs}, \cite{bcv}, \cite{chi}.

A Grassmannian model approach to this question was proposed in \cite{bne}. The idea is that the magic bases of $\mathbb C^N$ form a real algebraic manifold $X_N$, and the problem is whether the corresponding representation $\pi_N:C(S_N^+)\to M_N(C(X_N))$ is inner faithful or not. In order to solve this question, some methods are available from \cite{bfs}, \cite{wa3}, but their application would require the construction of a measure on $X_N$. An idea here, proposed in \cite{bne}, is that of using the push-forward of the Haar measure on $U_N^N$, via a Sinkhorn type algorithm \cite{sin}. But the convergence of the algorithm is not established yet.

A perhaps simpler question, with many degrees of freedom, is that of looking first at the various quantum subgroups $G\subset S_N^+$. As explained in \cite{ban}, the matrix model construction is available in this setting, with the model space $X_G\subset X_N$ being obtained by imposing the abstract Tannakian conditions which define $G$. However, in the non-transitive case the model space collapses to the null space. We will fix here this issue with a new construction, the idea being to allow 0 entries in our magic basis when the orbits of $G$ are non-trivial. To be more precise, we will assume that $G$ is quasi-transitive, in the sense that its orbits have the same size $K\in\mathbb N$, with $K|N$, and we will construct a universal matrix model $\pi:C(G)\to M_K(C(X))$, having the property that the images of the standard coordinates $u_{ij}\in C(G)$ are projections of rank $\leq1$.

One important source of examples when trying to understand properties of compact quantum groups are duals of discrete groups. This is where our construction is interesting. Indeed, the only transitive group duals are cyclic groups, while there are plenty of quasi-transitive examples coming from free products of cyclic groups. We can therefore do computations and give explicit examples of inner faithful models in this enlarged setting.

Our conjecture is that the quasi-flat model is inner faithful under suitable uniformity assumptions on $G$, and is in addition stationary under suitable analytic assumptions on $G$. We will discuss this conjecture for the classical groups $G\subset S_N$, and we will investigate it as well for the group duals $\widehat{\Gamma}\subset S_N^+$. The general case, including that of $G=S_N^+$ itself, and of other transitive subgroups $G\subset S_N^+$, remains an open problem.

The paper is organized as follows: 1-2 contain preliminaries on quasi-transitive quantum groups, in 3-4 we construct the universal models and we formulate the conjectures, in 5-6 we perform some basic work on these conjectures, in the classical group and in the group dual cases, and in 7-8 we discuss in detail the group dual case.

\medskip

\noindent {\bf Acknowledgments.} We would like to thank A. Chirvasitu for useful discussions.

\section{Quantum permutations}

We are interested in the quantum analogues of the permutation groups $G\subset S_N$. In order to introduce these objects, let us recall that a magic unitary is a square matrix over a $C^*$-algebra, $u\in M_N(A)$,  whose entries are projections ($p^2=p^*=p$), summing up to $1$ on each row and each column. The following key definition is due to Wang \cite{wa2}:

\begin{definition}
$C(S_N^+)$ is the universal $C^*$-algebra generated by the entries of a $N\times N$ magic unitary matrix $u=(u_{ij})$, with the morphisms given by
$$\Delta(u_{ij})=\sum_ku_{ik}\otimes u_{kj}\quad,\quad\varepsilon(u_{ij})=\delta_{ij}\quad,\quad S(u_{ij})=u_{ji}$$
as comultiplication, counit and antipode. 
\end{definition}

This algebra satisfies Woronowicz' axioms in \cite{wo1}, \cite{wo2}, and the underlying space $S_N^+$ is therefore a compact quantum group, called quantum permutation group.

Observe that any magic unitary $v\in M_N(A)$ produces a representation $\pi:C(S_N^+)\to A$, given by $\pi(u_{ij})=v_{ij}$. In particular, we have a representation as follows:
$$\pi:C(S_N^+)\to C(S_N)\quad:\quad u_{ij}\to\chi\left(\sigma\in S_N\big|\sigma(j)=i\right)$$

The corresponding embedding $S_N\subset S_N^+$ is an isomorphism at $N=2,3$, but not at $N\geq4$, where $S_N^+$ is infinite. Moreover, it is known that we have $S_4^+\simeq SO_3^{-1}$, and that any $S_N^+$ with $N\geq4$ has the same fusion semiring as $SO_3$. See \cite{bco}, \cite{wa2}.

The orbit decomposition theory for the subgroups $G\subset S_N^+$ was developed in \cite{bic}. We present here an alternative approach, based on the following simple fact:

\begin{proposition}
Given a quantum group $G\subset S_N^+$, with standard coordinates denoted $u_{ij}\in C(G)$, the following defines an equivalence relation on $\{1,\ldots,N\}$:
$$i\sim j\ {\rm when}\ u_{ij}\neq0$$
In the classical case, $G\subset S_N$, this is the orbit equivalence coming from the action of $G$.
\end{proposition}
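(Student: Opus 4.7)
The plan is to verify the three axioms of an equivalence relation separately, using the Hopf algebra structure (counit, antipode, comultiplication) on $C(G)$, and then to reinterpret the definition in the classical setting.

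For reflexivity, I would invoke the counit: since $\varepsilon(u_{ii})=\delta_{ii}=1\neq 0$, the element $u_{ii}$ cannot itself be zero. For symmetry, the antipode $S$ is an antimultiplicative $*$-map on the underlying Hopf $*$-algebra with $S(u_{ij})=u_{ji}$; as a well-defined linear map it sends $0$ to $0$, so $u_{ij}=0$ forces $u_{ji}=0$, and vice versa by symmetry of the argument.

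The heart of the proof is transitivity, and this is where I expect the only real (but mild) subtlety. Assume $u_{ij}\neq 0$ and $u_{jk}\neq 0$, and expand
$$\Delta(u_{ik})=\sum_l u_{il}\otimes u_{lk}.$$
The key observation is that the summands are pairwise orthogonal projections in $C(G)\otimes C(G)$: indeed $(u_{il}\otimes u_{lk})(u_{il'}\otimes u_{l'k})=u_{il}u_{il'}\otimes u_{lk}u_{l'k}$, which vanishes for $l\neq l'$ by the magic unitary condition on row $i$ of $u$. Hence each summand is dominated by $\Delta(u_{ik})$. Taking $l=j$, the element $u_{ij}\otimes u_{jk}$ is a nonzero projection (tensor products of nonzero elements are nonzero in the minimal $C^*$-tensor product), so $\Delta(u_{ik})\neq 0$. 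Since $\Delta$ is injective — apply $(\varepsilon\otimes\mathrm{id})\circ\Delta=\mathrm{id}$ — we conclude $u_{ik}\neq 0$, as desired. The ``main obstacle,'' such as it is, lies in noticing that one must use orthogonality of the summands (not just that $\Delta$ is a $*$-homomorphism) in order to rule out cancellation.

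Finally, for the classical statement, $C(G)$ consists of continuous functions on $G\subset S_N$ and the standard coordinates are the indicator functions $u_{ij}=\chi(\sigma\in G\mid\sigma(j)=i)$. Thus $u_{ij}\neq 0$ is equivalent to the existence of some $\sigma\in G$ with $\sigma(j)=i$, which is precisely the relation ``$i$ and $j$ lie in the same $G$-orbit.'' This identifies our relation with the usual orbit equivalence and closes the proof.
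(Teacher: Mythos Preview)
Your proof is correct and follows essentially the same route as the paper: reflexivity via the counit, symmetry via the antipode, transitivity via the comultiplication formula together with the observation that the summands are (orthogonal) projections, and the classical identification via the characteristic-function description of $u_{ij}$. Your write-up is in fact slightly more explicit than the paper's in spelling out the orthogonality of the summands and the injectivity of $\Delta$, but the argument is the same.
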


\begin{proof}
We first check the fact that we have indeed an equivalence relation:

(1) $i\sim i$ follows from $\varepsilon(u_{ij})=\delta_{ij}$, which gives $\varepsilon(u_{ii})=1$, and so $u_{ii}\neq0$, for any $i$.

(2) $i\sim j\implies j\sim i$ follows from $S(u_{ij})=u_{ji}$, which gives $u_{ij}\neq0\implies u_{ji}\neq0$.

(3) $i\sim j,j\sim k\implies i\sim k$ follows from $\Delta(u_{ik})=\sum_ju_{ij}\otimes u_{jk}$. Indeed, in this formula, the right-hand side is a sum of projections, so assuming $u_{ij}\neq0,u_{jk}\neq0$ for a certain index $j$, we have $u_{ij}\otimes u_{jk}>0$, and so $\Delta(u_{ik})>0$, which gives $u_{ik}\neq0$, as desired.

In the classical case now, $G\subset S_N$, the standard coordinates are the characteristic functions $u_{ij}=\chi(\sigma\in G|\sigma(j)=i)$. Thus the condition $u_{ij}\neq0$ is equivalent to the existence of an element $\sigma\in G$ such that $\sigma(j)=i$, and this means precisely that $i,j$ must be in the same orbit under the action of $G$, as claimed.
\end{proof}

Summarizing, we have a quantum analogue of the orbit decomposition from the classical case. It is convenient to introduce a few more related objects, as follows:

\begin{definition}
Associated to a quantum group $G\subset S_N^+$, producing as above the equivalence relation on $\{1,\ldots,N\}$ given by $i\sim j$ when $u_{ij}\neq0$, are as well:
\begin{enumerate}
\item The partition $\pi\in P(N)$ having as blocks the equivalence classes under $\sim$.

\item The binary matrix $\varepsilon\in M_N(0,1)$ given by $\varepsilon_{ij}=\delta_{u_{ij},0}$.
\end{enumerate}
\end{definition}

Observe that each of the objects $\sim,\pi,\varepsilon$ determines the other two ones. We will often assume, without mentioning it, that the orbits of $G\subset S_N^+$ come in increasing order, in the sense that the corresponding partition is as follows:
$$\pi=\{1,\ldots,K_1\},\ldots,\{K_1+\ldots+K_{M-1}+1,\ldots,K_1+\ldots+K_M\}$$

Indeed, at least for the questions that we are interested in here, we can always assume that it is so, simply by conjugating everything by a suitable permutation $\sigma\in S_N$.

In analogy with the classical case, we have as well the following notion:

\begin{definition}
We call $G\subset S_N^+$ transitive when $u_{ij}\neq 0$ for any $i,j$. Equivalently:
\begin{enumerate}
\item $\sim$ must be trivial, $i\sim j$ for any $i,j$.

\item $\pi$ must be the $1$-block partition.

\item $\varepsilon$ must be the all-$1$ matrix.
\end{enumerate}
\end{definition}

Let us discuss now the quantum analogue of the fact that given a subgroup $G\subset S_N$, with orbits of lenghts $K_1,\ldots,K_M$, we have an inclusion as follows:
$$G\subset S_{K_1}\times\ldots\times S_{K_M}$$

Given two quantum permutation groups $G\subset S_K^+,H\subset S_L^+$, with magic corepresentations denoted $u,v$, we can consider the algebra $A=C(G)*C(H)$, together with the magic matrix $w=diag(u,v)$. The pair $(A,w)$ satisfies Woronowicz's axioms, and we therefore obtain a quantum permutation group, denoted $G\,\hat{*}\,H\subset S_{K+L}^+$. See \cite{wa1}.

With this notion in hand, we have the following result:

\begin{proposition}
Given a quantum group $G\subset S_N^+$, with associated orbit decomposition partition $\pi\in P(N)$, having blocks of length $K_1,\ldots,K_M$, we have an inclusion
$$G\subset S_{K_1}^+\,\hat{*}\,\ldots\,\hat{*}\,S_{K_M}^+$$
where the product on the right is constructed with respect to the blocks of $\pi$. In the classical case, $G\subset S_N$, we obtain in this way the usual inclusion $G\subset S_{K_1}\times\ldots\times S_{K_M}$.
\end{proposition}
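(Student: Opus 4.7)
The plan is to exploit the block-diagonal shape that the equivalence relation $\sim$ forces on the magic matrix $u$, and then combine universal properties of $C(S_K^+)$ and of the free product of $C^*$-algebras.

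First I would observe that by the preceding proposition $u_{ij}=0$ whenever $i$ and $j$ lie in distinct blocks of $\pi$, and since we have arranged the blocks in increasing order of indices, this makes $u$ literally block diagonal, $u=\mathrm{diag}(u^{(1)},\ldots,u^{(M)})$, with $u^{(r)}\in M_{K_r}(C(G))$ the submatrix indexed by the $r$-th block. Each $u^{(r)}$ is itself a magic unitary of size $K_r$: the entries are projections by construction, and for $i$ in block $r$ the row sum $\sum_{j\in\text{block }r}u_{ij}$ equals the full row sum $\sum_{j=1}^N u_{ij}=1$, because the remaining terms vanish; the column sums work the same way. By the universal property of $C(S_{K_r}^+)$, each $u^{(r)}$ therefore induces a $*$-morphism $\pi_r:C(S_{K_r}^+)\to C(G)$, and the universal property of the unital $C^*$-algebra free product assembles the $\pi_r$ into a single $*$-morphism
$$\Pi:C(S_{K_1}^+)*\cdots*C(S_{K_M}^+)=C(S_{K_1}^+\,\hat{*}\,\ldots\,\hat{*}\,S_{K_M}^+)\longrightarrow C(G).$$

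It remains to check that $\Pi$ is surjective and Hopf. Surjectivity is immediate since the image contains every $u_{ij}$, either trivially (when $i\not\sim j$) or as an entry of some $u^{(r)}$. For comultiplication compatibility, I would note that the magic matrix $w$ of the free product is itself block diagonal with blocks the standard magic matrices of the factors, so a direct computation gives
$$(\Pi\otimes\Pi)\Delta w_{ij}=\sum_k\Pi(w_{ik})\otimes\Pi(w_{kj})=\sum_k u_{ik}\otimes u_{kj}=\Delta u_{ij},$$
the mixed terms with $k$ outside the block of $i,j$ vanishing on both sides by the block-diagonal shape. For the classical statement, when $G\subset S_N$ the algebra $C(G)$ is commutative, so $\Pi$ factors through the abelianisation of the free product, which is $C(S_{K_1}\times\cdots\times S_{K_M})$, producing the usual inclusion $G\subset S_{K_1}\times\cdots\times S_{K_M}$.

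There is no real obstacle in this argument: once the block-diagonal form of $u$ is recognised, everything reduces to two invocations of universality. The only point that needs a small verification is the Hopf compatibility, but the block-diagonal shape of both magic matrices kills every would-be obstruction.
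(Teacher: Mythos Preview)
Your argument is correct and is essentially the paper's own proof, spelled out in a bit more detail. The paper simply observes that $u_{ij}=0$ for $i\not\sim j$ forces the quotient map $C(S_N^+)\to C(G)$ to factor through $C(S_N^+)/\langle u_{ij}=0,\ i\not\sim j\rangle$, and identifies this intermediate algebra with $C(S_{K_1}^+)*\cdots*C(S_{K_M}^+)$; your explicit verification that each diagonal block is magic and that the resulting map is a surjective Hopf morphism is exactly what underlies that identification.
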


\begin{proof}
Since the standard coordinates $u_{ij}\in C(G)$ satisfy $u_{ij}=0$ for $i\not\sim j$, the algebra $C(G)$ appears as quotient of the following algebra:
\begin{eqnarray*}
C(S_N^+)\Big/\left<u_{ij}=0,\forall i\not\sim j\right>
&=&C(S_{K_1}^+)*\ldots*C(S_{K_M}^+)\\
&=&C(S_{K_1}^+\,\hat{*}\,\ldots\,\hat{*}\,S_{K_M}^+)
\end{eqnarray*}

Thus, we have an inclusion of quantum groups, as in the statement. Finally, observe that the classical version of the quantum group $S_{K_1}^+\,\hat{*}\,\ldots\,\hat{*}\,S_{K_M}^+$ is given by:
\begin{eqnarray*}
(S_{K_1}^+\,\hat{*}\,\ldots\,\hat{*}\,S_{K_M}^+)_{class}
&=&(S_{K_1}\times\ldots\times S_{K_M})_{class}\\
&=&S_{K_1}\times\ldots\times S_{K_M}
\end{eqnarray*}

Thus in the classical case we obtain $G\subset S_{K_1}\times\ldots\times S_{K_M}$, as claimed.
\end{proof}

Let us discuss now what happens in the group dual case, where the situation is non-trivial. Following the work of Bichon in \cite{bic}, we have the following result:

\begin{proposition}
Given a decomposition $N=K_1+\ldots+K_M$, and a quotient group $\mathbb Z_{K_1}*\ldots*\mathbb Z_{K_M}\to\Gamma$, we have an embedding, as follows:
$$\widehat{\Gamma}\subset\mathbb Z_{K_1}\,\hat{*}\,\ldots\,\hat{*}\,\mathbb Z_{K_M}\subset S_{K_1}^+\,\hat{*}\,\ldots\,\hat{*}\,S_{K_M}^+\subset S_N^+$$
Moreover, modulo the action of $S_N\times S_N$ on the magic unitaries, obtained by permuting the rows and columns, we obtain in this way all the group dual subgroups $\widehat{\Gamma}\subset S_N^+$.
\end{proposition}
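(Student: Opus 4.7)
The plan splits into proving the displayed chain of embeddings and then the converse universality statement. For the chain, the outer inclusion $S_{K_1}^+\,\hat{*}\,\ldots\,\hat{*}\,S_{K_M}^+\subset S_N^+$ is already Proposition~1.5 applied to $G=S_{K_1}^+\,\hat{*}\,\ldots\,\hat{*}\,S_{K_M}^+$. For the middle inclusion, the crucial step is the Fourier embedding $\widehat{\mathbb Z_K}\subset S_K^+$: with $g$ a generator of $\mathbb Z_K$ and $w=e^{2\pi i/K}$, the matrix $F\in M_K(C^*(\mathbb Z_K))$ defined by $F_{ij}=\frac{1}{K}\sum_{k=0}^{K-1}w^{k(i-j)}g^k$ is readily checked to be magic by orthogonality of characters, and taking block direct sums provides the inclusion of the dual free product. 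For the inner inclusion, the given surjection $\mathbb Z_{K_1}*\ldots*\mathbb Z_{K_M}\to\Gamma$ dualises to a surjection of Hopf $*$-algebras $C^*(\mathbb Z_{K_1})*\ldots*C^*(\mathbb Z_{K_M})=C^*(\mathbb Z_{K_1}*\ldots*\mathbb Z_{K_M})\to C^*(\Gamma)$, which at the quantum group level reads $\widehat{\Gamma}\subset\widehat{\mathbb Z_{K_1}}\,\hat{*}\,\ldots\,\hat{*}\,\widehat{\mathbb Z_{K_M}}$.

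For the converse, fix a group dual subgroup $\widehat{\Gamma}\subset S_N^+$, given by a surjection $\phi:C(S_N^+)\to C^*(\Gamma)$ with associated magic unitary $v_{ij}=\phi(u_{ij})$. The equivalence relation from Proposition~1.2 partitions $\{1,\ldots,N\}$ into orbit blocks $B_1,\ldots,B_M$ of sizes $K_1,\ldots,K_M$; using the diagonal $S_N\subset S_N\times S_N$ action (simultaneous row and column relabeling) we bring these blocks into the standard consecutive order, so Proposition~1.5 places $\widehat{\Gamma}$ inside $S_{K_1}^+\,\hat{*}\,\ldots\,\hat{*}\,S_{K_M}^+$. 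By the universal property of the free product $C(S_{K_1}^+)*\ldots*C(S_{K_M}^+)$, the map $\phi$ factors through $M$ block-restricted maps $C(S_{K_m}^+)\to C^*(\Gamma_m)\hookrightarrow C^*(\Gamma)$, where $\Gamma_m\subset\Gamma$ is the subgroup generated by the image; these define transitive group dual subgroups $\widehat{\Gamma_m}\subset S_{K_m}^+$, and the combined factorisation $C^*(\Gamma_1)*\ldots*C^*(\Gamma_M)=C^*(\Gamma_1*\ldots*\Gamma_M)\to C^*(\Gamma)$ exhibits $\Gamma$ as a quotient of $\Gamma_1*\ldots*\Gamma_M$.

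It remains to identify each $\Gamma_m$ with $\mathbb Z_{K_m}$, which is where the extra $S_K\times S_K$ freedom on rows and columns enters and where the main difficulty lies. The strategy is to argue that for any transitive magic unitary $v\in M_K(C^*(\Lambda))$, the entries of a fixed row are orthogonal nonzero projections summing to $1$ and hence generate a commutative unital $*$-subalgebra of dimension exactly $K$; combining this with the analogous column statement and the Hopf axiom $\Delta(v_{ij})=\sum_k v_{ik}\otimes v_{kj}$, which must respect the group-like structure of $\Lambda\subset C^*(\Lambda)$, one can pin down each $v_{ij}$ as $\frac{1}{K}\sum_k w^{k(i-j)}g^k$ for some generator $g$ of a cyclic quotient of $\Lambda$, and a dimension count then forces $\Lambda=\mathbb Z_K$. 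Independent row and column relabelings correspond to different choices of the generator $g$ and different orderings of characters, exactly accounting for the $S_K\times S_K$ ambiguity. This is the Hopf-algebraic content of Bichon's analysis in \cite{bic}; the subtle technical point, and the real obstacle, is ruling out non-abelian or strictly larger $\Lambda$ supporting such a magic matrix.
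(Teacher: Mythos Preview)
Your argument for the forward chain is correct and matches the paper's approach, though you supply the explicit Fourier magic matrix while the paper simply invokes the standard identification $\widehat{\mathbb Z}_K\simeq\mathbb Z_K\subset S_K$ (it also routes through the classical $S_{K_1}\,\hat{*}\,\ldots\,\hat{*}\,S_{K_M}$ as an intermediate step, which you skip). One small quibble: the outermost inclusion $S_{K_1}^+\,\hat{*}\,\ldots\,\hat{*}\,S_{K_M}^+\subset S_N^+$ is part of the \emph{definition} of the dual free product given just before Proposition~1.5, not an application of that proposition.

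For the converse, the paper does exactly what you end up doing: it observes that the orbit structure supplies the decomposition $N=K_1+\ldots+K_M$ and then cites \cite{bic} for the rest. Your more detailed sketch---factoring through the diagonal blocks to obtain transitive group duals $\widehat{\Gamma_m}\subset S_{K_m}^+$ and then arguing that each must be $\widehat{\mathbb Z_{K_m}}$ up to row/column permutation---is the right shape, and the step you flag as ``the real obstacle'' (classifying transitive group-dual magic unitaries over $C^*(\Lambda)$) is indeed precisely the substantive content of Bichon's result. So there is no gap relative to the paper; you have simply unpacked the citation one layer further and correctly identified where the actual work lies.
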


\begin{proof}
Given a quotient group $\Gamma$ as in the statement, by composing a number of standard embeddings and identifications, we obtain indeed an embedding, as follows:
\begin{eqnarray*}
\widehat{\Gamma}
&\subset&\widehat{\mathbb Z_{K_1}*\ldots*\mathbb Z_{K_M}}
=\widehat{\mathbb Z}_{K_1}\,\hat{*}\,\ldots\,\hat{*}\,\widehat{\mathbb Z}_{K_M}
\simeq\mathbb Z_{K_1}\,\hat{*}\,\ldots\,\hat{*}\,\mathbb Z_{K_M}\\
&\subset&S_{K_1}\hat{*}\,\ldots\,\hat{*}\,S_{K_M}
\subset S_{K_1}^+\hat{*}\,\ldots\,\hat{*}\,S_{K_M}^+
\subset S_{K_1+\ldots+K_M}^+
\end{eqnarray*}

Regarding now the last assertion, this basically follows by letting  $N=K_1+\ldots+K_M$ be the decomposition coming from the orbit structure of $\widehat{\Gamma}\subset S_N^+$. See \cite{bic}.
\end{proof}

Let us now consider the case where the decomposition $N=K_1+\ldots+K_M$ is ``minimal'', in the sense that the quotient map $\mathbb Z_{K_1}*\ldots*\mathbb Z_{K_M}\to\Gamma$ is faithful on each $\mathbb Z_{K_i}$. With this assumption made, we have:

\begin{theorem}
Assume that $\widehat{\Gamma}\subset S_N^+$ comes from a quotient group $\mathbb Z_{K_1}*\ldots*\mathbb Z_{K_M}\to\Gamma$ with $K_1+\ldots+K_M=N$, such that the quotient map is faithful on each $\mathbb Z_{K_i}$.
\begin{enumerate}
\item The associated orbit decomposition is $N=K_1+\ldots+K_M$.

\item The inclusions $\widehat{\Gamma}\subset S_{K_1}^+\,\hat{*}\,\ldots\,\hat{*}\,S_{K_M}^+$ from Propositions 1.5 and 1.6 coincide.
\end{enumerate}
\end{theorem}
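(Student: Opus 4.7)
The approach is to unravel Proposition 1.6 to produce an explicit formula for the magic unitary $u\in M_N(C^*(\Gamma))$ of $\widehat{\Gamma}\subset S_N^+$, read part (1) off that formula, and then deduce part (2) by a uniqueness argument about factoring a single morphism $C(S_N^+)\to C(\widehat{\Gamma})$. By Proposition 1.6, the matrix $u$ is block diagonal with blocks of sizes $K_1,\ldots,K_M$; the $i$-th block is the image in $C^*(\Gamma)$ of the standard magic unitary of the regular inclusion $\mathbb{Z}_{K_i}\subset S_{K_i}$, which in the Fourier picture $C(\mathbb{Z}_{K_i})\simeq C^*(\mathbb{Z}_{K_i})$ has entries
$$u_{ab}^{(i)}=\frac{1}{K_i}\sum_{k=0}^{K_i-1}w^{(a-b)k}\,g_i^k,$$
where $w$ is a primitive $K_i$-th root of unity and $g_i$ is the image in $\Gamma$ of the generator of $\mathbb{Z}_{K_i}$.

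For part (1), the off-block entries of $u$ vanish by construction, so indices in different blocks are not $\sim$-related. For $a,b$ in the same $i$-th block, the faithfulness hypothesis makes $g_i^0,g_i^1,\ldots,g_i^{K_i-1}$ into $K_i$ distinct elements of $\Gamma$, hence linearly independent in $\mathbb{C}[\Gamma]\subset C^*(\Gamma)$. Since the Fourier coefficients $w^{(a-b)k}$ are nonzero roots of unity, $u_{ab}^{(i)}$ is a nontrivial linear combination of linearly independent elements, hence nonzero. Thus the orbit partition is exactly $\{1,\ldots,K_1\},\ldots,\{N-K_M+1,\ldots,N\}$, which gives (1).

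For part (2), both candidate inclusions are morphisms $C(S_{K_1}^+\,\hat{*}\,\cdots\,\hat{*}\,S_{K_M}^+)\to C(\widehat{\Gamma})$ that factor the single morphism $C(S_N^+)\to C(\widehat{\Gamma})$ defined by $u$ through the canonical surjection $q\colon C(S_N^+)\twoheadrightarrow C(S_{K_1}^+\,\hat{*}\,\cdots\,\hat{*}\,S_{K_M}^+)$ of Proposition 1.5: such a factorization exists because by (1) this morphism kills the relations $u_{ij}=0$ for $i\not\sim j$, and it is unique because $q$ is surjective; hence the two inclusions coincide. The only non-formal step in the whole argument is the non-vanishing claim above, and this is exactly where the hypothesis enters, since a collapse of some $g_i^k$ in $\Gamma$ would instantly make a Fourier projection vanish and produce a strictly finer orbit partition.
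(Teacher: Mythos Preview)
Your argument is correct and follows essentially the same strategy as the paper: the block-diagonal form of $u$ forces the orbit partition to refine $N=K_1+\ldots+K_M$, and the faithfulness hypothesis rules out any further refinement; part (2) then drops out formally. The paper's proof leaves the ``no further refinement'' step implicit, while you spell it out concretely via the Fourier formula $u_{ab}^{(i)}=\frac{1}{K_i}\sum_k w^{(a-b)k}g_i^k$ and the linear independence of distinct group elements in $\mathbb{C}[\Gamma]$---this is exactly the content behind the paper's one-line claim, and your uniqueness-of-factorization argument for (2) makes precise what the paper calls ``just an observation''.
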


\begin{proof}
We recall from Proposition 1.6 that the subgroup $\widehat{\Gamma}\subset S_N^+$ appears as follows:
$$\widehat{\Gamma}\subset\mathbb Z_{K_1}\,\hat{*}\,\ldots\,\hat{*}\,\mathbb Z_{K_M}\subset S_{K_1}^+\,\hat{*}\,\ldots\,\hat{*}\,S_{K_M}^+\subset S_N^+$$

(1) By construction of $\widehat{\Gamma}\subset S_N^+$, the orbit decomposition for this quantum group must appear via a refinement of $N=K_1+\ldots+K_M$. On the other hand, since $\mathbb Z_{K_1}*\ldots*\mathbb Z_{K_M}\to\Gamma$ is faithful on each $\mathbb Z_{K_i}$, the elements $(K_1+\ldots+K_{i-1})+1, \ldots,(K_1+\ldots+K_{i-1})+K_i$ must belong to the same orbit under the action of $\widehat{\Gamma}$, and we are done.

(2) This is just an observation, which is clear from (1) above.
\end{proof}

For more on the group duals $\widehat{\Gamma}\subset S_N^+$, we refer to \cite{bic}. We will come back later to these quantum groups, under the extra assumption $K_1=\ldots=K_M$.

\section{Quasi-transitivity}

We discuss now an extension of the notion of transitivity, that we call quasi-transitivity. We will see later on that the universal flat matrix model construction from \cite{ban}, \cite{bne}, which works well in the transitive case, adapts to the quasi-transitive case. 

In terms of the objects $\sim,\pi,\varepsilon$ introduced above, we have:

\begin{definition}
A quantum permutation group $G\subset S_N^+$ is called quasi-transitive when all its orbits have the same size. Equivalently:
\begin{enumerate}
\item $\sim$ has equivalence classes of same size.

\item $\pi$ has all the blocks of equal length.

\item $\varepsilon$ is block-diagonal with blocks the flat matrix of size $K$.
\end{enumerate}
\end{definition}

As a first example, if $G$ is transitive then it is quasi-transitive. In general now, if we denote by $K\in\mathbb N$ the common size of the blocks, and by $M\in\mathbb N$ their multiplicity, then we must have $N=KM$. We have the following result:

\begin{proposition}
Assuming that $G\subset S_N^+$ is quasi-transitive, we must have
$$G\subset\underbrace{S_K^+\,\hat{*}\,\ldots\,\hat{*}\,S_K^+}_{M\ terms}$$
where $K\in\mathbb N$ is the common size of the orbits, and $M\in\mathbb N$ is their number.
\end{proposition}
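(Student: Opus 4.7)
The plan is to obtain Proposition 2.2 as an immediate specialization of Proposition 1.5. By Definition 2.1, quasi-transitivity of $G\subset S_N^+$ means exactly that the orbit partition $\pi\in P(N)$ has all $M$ of its blocks of the common length $K$, so in particular $N=KM$. Feeding $K_1=K_2=\ldots=K_M=K$ into the conclusion of Proposition 1.5 yields at once
$$G\subset\underbrace{S_K^+\,\hat{*}\,\ldots\,\hat{*}\,S_K^+}_{M\ \text{terms}},$$
which is precisely the stated inclusion.

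The only bookkeeping point I would make explicit is that under the standing convention spelled out after Definition 1.3, the orbits of $G$ appear as the consecutive blocks $\{1,\ldots,K\},\{K+1,\ldots,2K\},\ldots,\{(M-1)K+1,\ldots,MK\}$. In this ordering the matrix $\varepsilon$ of Definition 1.3(2) is literally block-diagonal with $M$ blocks each equal to the $K\times K$ all-ones matrix, which is item (3) of Definition 2.1. Hence the free product appearing on the right of Proposition 1.5 is canonically formed with respect to these equal-size blocks, and the quotient $C(S_N^+)/\langle u_{ij}=0:i\not\sim j\rangle$ identified there becomes the $M$-fold free product $C(S_K^+)*\ldots*C(S_K^+)$ with magic corepresentation $\mathrm{diag}(u^{(1)},\ldots,u^{(M)})$.

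I do not expect any real obstacle: the statement is pure unwinding of definitions against the already-established Proposition 1.5, and I anticipate the authors' proof will amount to little more than a one-line appeal to that result.
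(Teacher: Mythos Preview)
Your proof is correct and matches the paper's approach exactly: the authors simply invoke Proposition 1.5 with $K_1=\ldots=K_M=K$, noting as an aside the classical embedding $G\subset S_K\times\ldots\times S_K$. Your additional bookkeeping about the block ordering is consistent with the paper's standing conventions and does no harm.
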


\begin{proof}
This simply follows from Proposition 1.5 above, because, with the notations there, in the quasi-transitive case we must have $K_1=\ldots=K_M=K$. Observe that in the classical case, we obtain in this way the usual embedding $G\subset\underbrace{S_K\times\ldots\times S_K}_{M\ terms}$.
\end{proof}

Let us discuss now the examples. Assume that $G\subset S_K^+\,\hat{*}\,\ldots\,\hat{*}\,S_K^+$. If $u,v$ are the fundamental corepresentations of $C(S_N^+),C(S_K^+)$, consider the quotient map $\pi_i:C(S_N^+)\to C(S_K^+)$ constructed as follows: 
$$u\to diag(1_K,\ldots,1_K,\underbrace{v}_{i-th\ term},1_K,\ldots,1_K)$$

We can then set $C(G_{i}) = \pi_{i}(C(G))$, and we have the following result:

\begin{proposition}
If $G_{i}$ is transitive for all $i$, then $G$ is quasi-transitive.
\end{proposition}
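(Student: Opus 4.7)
My plan is to exploit the block-diagonal structure of the magic unitary and the fact that a quotient map cannot turn a nonzero projection into a nonzero one (only a nonzero projection can map to a nonzero projection, but a zero projection always maps to zero).

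\medskip

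\textbf{Step 1: Block structure.} Since $G\subset S_K^+\,\hat*\,\ldots\,\hat*\,S_K^+$ by hypothesis, the fundamental magic unitary $u$ of $G$ is block-diagonal, of the form $u=\mathrm{diag}(u^{(1)},\ldots,u^{(M)})$, with each $u^{(i)}\in M_K(C(G))$ a magic unitary. Consequently the entries of $u$ outside these $M$ blocks vanish, and so the associated relation $\sim$ is a refinement of the block partition $\{1,\ldots,K\},\{K+1,\ldots,2K\},\ldots$ In particular, every orbit has size at most $K$, and to conclude quasi-transitivity it suffices to show that every orbit has size exactly $K$, i.e.\ that each $u^{(i)}_{ab}$ is nonzero in $C(G)$.

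\medskip

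\textbf{Step 2: Interpreting $\pi_i$ on $C(G)$.} The map $\pi_i:C(S_N^+)\to C(S_K^+)$ sends $u^{(i)}\to v$ and $u^{(j)}\to 1_K$ for $j\neq i$. Restricting to $C(G)$, it factors through a surjection $C(G)\to C(G_i)$ under which $u^{(i)}_{ab}$ is mapped to the standard generator $w_{ab}\in C(G_i)$ of the magic unitary of $G_i\subset S_K^+$, while the remaining blocks $u^{(j)}_{ab}$ ($j\neq i$) are mapped to the scalars $\delta_{ab}$.

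\medskip

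\textbf{Step 3: Concluding.} Assume now that each $G_i$ is transitive. Then for each fixed $i$ and each pair $(a,b)$, the image $w_{ab}\in C(G_i)$ of $u^{(i)}_{ab}$ is nonzero. Since $\pi_i$ is a $*$-homomorphism, this forces $u^{(i)}_{ab}\neq 0$ in $C(G)$. Thus inside the $i$-th block all entries of $u$ are nonzero, so by definition of $\sim$ the whole $i$-th block forms a single orbit of size $K$. Since this holds for every $i\in\{1,\ldots,M\}$, the orbit decomposition of $G$ has all blocks of the same size $K$, which is precisely the definition of quasi-transitivity.

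\medskip

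\textbf{On the main difficulty.} There is essentially no obstacle: once the block-diagonal structure of $u$ and the explicit action of $\pi_i$ on each block are identified, the argument is just the trivial observation that $*$-homomorphisms cannot create nonzero elements from zero. The only thing worth checking with care is Step 2, namely that $\pi_i$ restricted to $C(G)$ does land in (and surjects onto) a copy of $C(G_i)$ whose standard magic generator is the image of $u^{(i)}$; this is immediate from the universal property of the free product realization $C(G)\twoheadleftarrow C(S_K^+)*\cdots*C(S_K^+)$ provided by Proposition~1.5.
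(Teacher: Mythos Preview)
Your proof is correct and follows the same two-step skeleton as the paper: the inclusion $G\subset S_K^+\,\hat*\,\ldots\,\hat*\,S_K^+$ forces orbits of size $\leq K$, and transitivity of each $G_i$ forces orbits of size $\geq K$. The paper phrases the second step dually, by asserting an embedding $G_1\times\ldots\times G_M\subset G$ and comparing orbits of a subgroup; your direct argument via the quotient maps $C(G)\to C(G_i)$ (a $*$-homomorphism cannot send $0$ to a nonzero element) is the same fact read contravariantly, and is arguably more self-contained since the product embedding is stated but not justified in the paper.
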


\begin{proof}
We have embeddings as follows:
$$G_1\times\ldots\times G_M\subset G\subset\underbrace{S_K^+\,\hat{*}\,\ldots\,\hat{*}\,S_K^+}_{M\ terms}$$

It follows that the size of any orbit of $G$ is at least $K$ (it contains $G_1\times\ldots\times G_M$) and at most $K$ (it is contained in $S_K^+\,\hat{*}\,\ldots\,\hat{*}\,S_K^+$). Thus, $G$ is quasi-transitive.
\end{proof}

We call the quasi-transitive subgroups appearing as above ``of product type''. Observe that there are quasi-transitive groups which are not of product type, as for instance the group $G=S_2\subset S_2\times S_2\subset S_4$ obtained by using the embedding $\sigma\to(\sigma,\sigma)$. Indeed, the quasi-transitivity is clear, say by letting $G$ act on the vertices of a square. On the other hand, since we have $G_1=G_2=\{1\}$, this group is not of product type.

In general, we can construct examples by using various product operations:

\begin{proposition}
Given transitive subgroups $G_1,\ldots,G_M\subset S_K^+$, the following constructions produce quasi-transitive subgroups $G\subset\underbrace{S_K^+\,\hat{*}\,\ldots\,\hat{*}\,S_K^+}_{M\ terms}$, of product type:
\begin{enumerate}
\item The usual product: $G=G_1\times\ldots\times G_M$.

\item The dual free product: $G=G_1\,\hat{*}\,\ldots\,\hat{*}\, G_M$.
\end{enumerate}
\end{proposition}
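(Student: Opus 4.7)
The plan is to reduce both parts to a direct invocation of Proposition 2.3. In each case I would identify the images $C(G_i) = \pi_i(C(G))$ appearing in that proposition with the given transitive quantum groups $G_i\subset S_K^+$; once this identification is established, quasi-transitivity of $G$ follows immediately, and the description ``of product type'' is then automatic from the definition preceding the statement.

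For (2), by construction the dual free product $G = G_1\,\hat{*}\,\ldots\,\hat{*}\,G_M$ has algebra $C(G) = C(G_1)*\ldots*C(G_M)$, equipped with the block-diagonal magic matrix whose $i$-th block is the fundamental corepresentation of $C(G_i)$. The embedding into $C(S_K^+\,\hat{*}\,\ldots\,\hat{*}\,S_K^+) = C(S_K^+)*\ldots*C(S_K^+)$ is induced on each free factor by the defining surjection $C(S_K^+)\twoheadrightarrow C(G_i)$. Composing with $\pi_i$, which sends every free factor other than the $i$-th to scalars and acts as the identity on the $i$-th factor, I recover exactly $C(G_i)$.

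For (1), the usual product $G = G_1\times\ldots\times G_M$ has $C(G) = C(G_1)\otimes\ldots\otimes C(G_M)$, which sits inside $C(S_K^+)*\ldots*C(S_K^+)$ as the quotient obtained by imposing commutation between the $M$ free factors. The same type of argument applies: restricting $\pi_i$ to this quotient kills every tensorand $j\neq i$ and preserves $C(G_i)$, so $\pi_i(C(G)) = C(G_i)$.

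The only real task is the bookkeeping needed to verify that the embeddings $G\subset S_K^+\,\hat{*}\,\ldots\,\hat{*}\,S_K^+$ used in the two constructions are compatible with the coordinate projections $\pi_i$ of Proposition 2.3. I do not expect a genuine obstacle here: both product operations are built precisely so that each ``coordinate'' factor of $G$ literally recovers the original $G_i$. Once this compatibility is checked, the transitivity hypothesis on each $G_i$ together with Proposition 2.3 disposes of both cases simultaneously.
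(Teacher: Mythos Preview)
Your proposal is correct and follows essentially the same approach as the paper. The paper's own proof is a single sentence---``clear from definitions, because in each case, the quantum groups $G_i\subset S_K^+$ constructed in Proposition 2.3 are those in the statement''---and your plan simply unpacks this by explicitly identifying $\pi_i(C(G))$ with $C(G_i)$ in both the free-product and tensor-product cases, which is exactly the content the paper leaves implicit.
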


\begin{proof}
All these assertions are clear from definitions, because in each case, the quantum groups $G_i\subset S_K^+$ constructed in Proposition 2.3 are those in the statement.
\end{proof}

In the group dual case, we have the following result:

\begin{proposition}
The group duals $\widehat{\Gamma}\subset\underbrace{S_K^+\,\hat{*}\,\ldots\,\hat{*}\,S_K^+}_{M\ terms}$ which are of product type are precisely those appearing from intermediate groups of the following type:
$$\underbrace{\mathbb Z_K*\ldots*\mathbb Z_K}_{M\ terms}\to\Gamma\to\underbrace{\mathbb Z_K\times\ldots\times\mathbb Z_K}_{M\ terms}$$
\end{proposition}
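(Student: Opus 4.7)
The plan is to translate the condition of being of product type into group-theoretic data on $\Gamma$, by working through the correspondence of Proposition 1.6 and the orbit computation of Theorem 1.7. First, since a subgroup of product type is quasi-transitive with orbits of size $K$, I start by applying Proposition 1.6 and Theorem 1.7 to write $\widehat{\Gamma}\subset S_K^+\hat*\ldots\hat*S_K^+$ as coming from a quotient map $q:\mathbb Z_K*\ldots*\mathbb Z_K\twoheadrightarrow\Gamma$ that is faithful on each factor $\mathbb Z_K$. Write $\bar g^{(j)}$ for the image in $\Gamma$ of the standard generator of the $j$-th copy of $\mathbb Z_K$.

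Next, I would identify the quantum group $G_j\subset S_K^+$ defined by $C(G_j)=\pi_j(C(\widehat{\Gamma}))$ as a group dual. Inspecting the definition of $\pi_j$, which replaces all blocks other than the $j$-th by the identity matrix, on the group algebra $C(\widehat{\Gamma})=C^*(\Gamma)$ this map amounts to killing $\bar g^{(k)}$ for all $k\neq j$. Hence $G_j=\widehat{\Gamma^{(j)}}$, where $\Gamma^{(j)}=\Gamma/N_j$ and $N_j$ is the normal closure of $\{\bar g^{(k)}\mid k\neq j\}$. Since $\Gamma^{(j)}$ is generated by the image of $\bar g^{(j)}$, it is a cyclic group whose order divides $K$. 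Moreover, a cyclic group dual $\widehat{\mathbb Z_d}$ realized as a subgroup of $S_K^+$ through the coordinates $\pi_j(u_{ab})$ produces a transitive magic unitary if and only if $d=K$; thus $G_j$ is transitive in $S_K^+$ exactly when $\Gamma^{(j)}\cong\mathbb Z_K$.

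It remains to translate the collection of conditions ``$\Gamma^{(j)}\cong\mathbb Z_K$ for every $j$'' into the existence of the intermediate quotient. Assuming all $\Gamma^{(j)}\cong\mathbb Z_K$, the canonical maps assemble into a homomorphism $\Gamma\to\Gamma^{(1)}\times\ldots\times\Gamma^{(M)}=\mathbb Z_K\times\ldots\times\mathbb Z_K$. This is surjective because $\bar g^{(j)}$ is sent to the generator of the $j$-th factor and to $0$ on the others, so the image contains all standard generators; together with $q$ this yields the required factorization. Conversely, given a factorization $\mathbb Z_K*\ldots*\mathbb Z_K\twoheadrightarrow\Gamma\twoheadrightarrow\mathbb Z_K\times\ldots\times\mathbb Z_K$ in which the total composition is the canonical abelianization, projecting onto the $j$-th factor of the product gives a surjection $\Gamma\to\mathbb Z_K$ which kills $N_j$ and sends $\bar g^{(j)}$ to a generator; this forces $\Gamma^{(j)}\cong\mathbb Z_K$ and hence $G_j$ transitive.

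The step I expect to be the main obstacle is the second one, namely giving a clean identification of $G_j$ as $\widehat{\Gamma^{(j)}}$, since this requires carefully unwinding how the homomorphism $\pi_j$ acts on the distinguished spanning elements of $C^*(\Gamma)$ inherited from the embedding $\widehat{\Gamma}\subset\mathbb Z_K\hat*\ldots\hat*\mathbb Z_K$; everything else reduces to elementary manipulations with quotients of free products.
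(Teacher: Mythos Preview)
Your proposal is correct and follows essentially the same line as the paper's proof: both identify each $G_j$ as a group dual $\widehat{\Gamma_j}$ for a quotient $\Gamma\to\Gamma_j$, observe that transitivity of $G_j\subset S_K^+$ forces $\Gamma_j\cong\mathbb Z_K$, and then pass to the factorization through $\mathbb Z_K^{\times M}$. The only cosmetic difference is that the paper obtains the surjection $\Gamma\to\mathbb Z_K^M$ by dualizing the inclusion $G_1\times\ldots\times G_M\subset\widehat{\Gamma}$ from Proposition~2.3, whereas you assemble it directly from the individual quotient maps $\Gamma\to\Gamma^{(j)}$; your anticipated ``main obstacle'' (showing $C(G_j)=\pi_j(C^*(\Gamma))$ is $C^*(\Gamma/N_j)$) is exactly what the paper packages in the single sentence ``the subgroups $G_i$ must be group duals as well, $G_i=\widehat{\Gamma_i}$, for certain quotient groups $\Gamma\to\Gamma_i$''.
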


\begin{proof}
It is clear that any intermediate quotient $\Gamma$ as in the statement produces a quantum permutation group $\widehat{\Gamma}\subset S_N^+$ which is of product type. Conversely, given a group dual $\widehat{\Gamma}\subset S_N^+$, coming from a quotient group $\mathbb Z_K^{* M}\to\Gamma$, the subgroups $G_i\subset\widehat{\Gamma}$ constructed in Proposition 2.3 must be group duals as well, $G_i=\widehat{\Gamma}_i$, for certain quotient groups $\Gamma\to\Gamma_i$. Now if $\widehat{\Gamma}$ is of product type, $\widehat{\Gamma}_i\subset S_K^+$ must be transitive, and hence equal to $\widehat{\mathbb Z}_K$. We then conclude that we have $\widehat{\mathbb Z_K^M}\subset\widehat{\Gamma}$, and so $\Gamma\to\mathbb Z_K^M$, as in the proof of Proposition 2.3.
\end{proof}

In order to give now some other classes of examples, we use the notion of normality for compact quantum groups, from \cite{cdp}, \cite{wa3}. This notion is introduced as follows:

\begin{definition}
Given a quantum subgroup $H\subset G$, coming from a quotient map $\pi:C(G)\to C(H)$, the following are equivalent:
\begin{enumerate}
\item $A=\{a\in C(G)|(id\otimes\pi)\Delta(a)=a\otimes1\}$ satisfies $\Delta(A)\subset A\otimes A$.

\item $B=\{a\in C(G)|(\pi\otimes id)\Delta(a)=1\otimes a\}$ satisfies $\Delta(B)\subset B\otimes B$.

\item We have $A=B$, as subalgebras of $C(G)$.
\end{enumerate}
If these conditions are satisfied, we say that $H\subset G$ is a normal subgroup.
\end{definition}

As explained in \cite{cdp}, in the classical case we obtain the usual normality notion for the subgroups. Also, in the group dual case the normality of any subgroup, which must be a group dual subgroup, is automatic. Now with this notion in hand, we have:

\begin{theorem}
Assuming that $G\subset S_N^+$ is transitive, and that $H\subset G$ is normal, $H\subset S_N^+$ follows to be quasi-transitive.
\end{theorem}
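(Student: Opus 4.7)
The plan is to introduce, for each $H$-orbit $B$ and each index $i\in\{1,\ldots,N\}$, the element $a_{B,i}=\sum_{j\in B}u_{ij}\in C(G)$, which is a projection as a sum of mutually orthogonal entries in row $i$ of the magic unitary, together with its column analogue $\tilde a_{B,i}=\sum_{j\in B}u_{ji}$. A direct computation with $\Delta$, using that $\pi(u_{kj})=0$ whenever $k$ and $j$ lie in different $H$-orbits, shows that $(id\otimes\pi)\Delta(a_{B,i})=a_{B,i}\otimes 1$, so $a_{B,i}\in A$ in the sense of Definition 2.7; symmetrically $\tilde a_{B,i}\in B$. This is where normality enters: the identity $A=B$ forces both families into $A\cap B$.

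Applying the opposite coinvariance condition to $a_{B,i}\in B$, the identity $(\pi\otimes id)\Delta(a_{B,i})=1\otimes a_{B,i}$ rewrites as
$$\sum_{k\sim_H i}\pi(u_{ik})\otimes(a_{B,k}-a_{B,i})=0,$$
and since $\{\pi(u_{ik})\}_{k\sim_H i}$ are nonzero mutually orthogonal projections, hence linearly independent, we deduce $a_{B,k}=a_{B,i}$ whenever $k\sim_H i$. Consequently $a_{B,i}$ depends only on the $H$-orbit $B_i$ of $i$, yielding a well-defined projection $c(B_i,B)\in C(G)$; the same argument applied to $\tilde a_{B,j}$ produces $\tilde c(B,B_j)$.

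Evaluating the double sum $\sum_{i\in B',\,j\in B}u_{ij}$ row-by-row and then column-by-column gives the key identity
$$|B'|\,c(B',B)=|B|\,\tilde c(B,B').$$
Since both $c(B',B)$ and $\tilde c(B,B')$ are projections, this relation forces either both to vanish or the scalars to match: writing $c=(|B|/|B'|)\tilde c$, idempotence of $c$ combined with $c\ne 0$ gives $|B|/|B'|=1$. Finally, transitivity of $G$ gives $u_{ij}\ne 0$ for all $i,j$, so $c(B',B)=\sum_{j\in B}u_{i_0 j}$ (for any fixed $i_0\in B'$) is a nonzero sum of nonzero mutually orthogonal projections. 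Therefore $|B|=|B'|$ for every pair of $H$-orbits, which is exactly the quasi-transitivity of $H\subset S_N^+$.

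The crux, and main obstacle, is passing from $a_{B,i}\in A$ to orbit-constancy of $a_{B,i}$: this is where normality is decisive, through $A=B$, turning $H$-invariance into a statement about the $G$-action on the set of $H$-orbits. Once that is in place, the $|B'|c=|B|\tilde c$ comparison together with transitivity finishes the argument cleanly, without any further Hopf-algebraic input.
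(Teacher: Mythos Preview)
Your proof is correct and follows essentially the same route as the paper's. Both define the row-orbit sums $a_{B,i}=\sum_{j\in B}u_{ij}$, verify $(id\otimes\pi)\Delta(a_{B,i})=a_{B,i}\otimes1$, invoke normality to obtain the opposite coinvariance, deduce that $a_{B,i}$ depends only on the $H$-orbit of $i$, and then double-count $\sum_{i\in B',\,j\in B}u_{ij}$ to compare orbit sizes. The only cosmetic differences are: the paper switches from rows to columns via the antipode $S(u_{ij})=u_{ji}$ rather than introducing your $\tilde a_{B,i}$ directly, and it concludes from $\|x_i\|=\|y_j\|=1$ rather than from idempotence of $c$ and $\tilde c$; your argument that $c=(|B|/|B'|)\tilde c$ with both projections nonzero forces the ratio to be $1$ is an equally clean finish.
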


\begin{proof}
Consider the quotient map $\pi:C(G)\to C(H)$, as in Definition 2.6, given at the level of standard coordinates by $u_{ij}\mapsto v_{ij}$. Consider two orbits $O_1,O_2$ of $H$ and set:
$$x_i=\sum_{j\in O_1}u_{ij}\quad,\quad y_i=\sum_{j\in O_2}u_{ij}$$

These two elements are orthogonal projections in $C(G)$ and they are nonzero, because they are sums of nonzero projections by transitivity of $G$. We have:
$$(id\otimes\pi)\Delta(x_i)
=\sum_k\sum_{j\in O_1}u_{ik}\otimes v_{kj}
=\sum_{k\in O_1}\sum_{j\in O_1}u_{ik}\otimes v_{kj}
=\sum_{k\in O_1}u_{ik}\otimes 1
=x_i\otimes 1$$

Thus by normality of $H$ we have $(\pi\otimes id)\Delta(x_i)=1\otimes x_i$. On the other hand, assuming that we have $i\in O_2$, we obtain:
$$(\pi\otimes id)\Delta(x_i)=\sum_k\sum_{j\in O_1}v_{ik}\otimes u_{kj}=\sum_{k\in O_2}v_{ik}\otimes x_k$$

Multiplying this by $v_{ik}\otimes 1$ with $k\in O_2$ yields $v_{ik}\otimes x_k=v_{ik}\otimes x_i$, that is to say $x_k=x_i$. In other words, $x_i$ only depends on the orbit of $i$. The same is of course true for $y_i$.

By using this observation, we can compute the following element:
$$z=\sum_{k\in O_2}\sum_{j\in O_1}u_{kj}=\sum_{k\in O_2}x_k=\vert O_2\vert x_i$$

On the other hand, by applying the antipode, we have as well:
$$S(z)=\sum_{k\in O_2}\sum_{j\in O_1}u_{jk}=\sum_{j\in O_1}y_j=\vert O_1\vert y_j$$

We therefore obtain the following formula:
$$S(x_i)=\frac{\vert O_1\vert}{\vert O_2\vert}y_j$$

Now since both $x_i$ and $y_j$ have norm one, we conclude that the two orbits have the same size, and this finishes the proof.
\end{proof}

Some additional interesting transitivity questions appear in the graph context. See \cite{cha}.

\section{Matrix models}

Given a quantum permutation group $G\subset S_N^+$, we will be interested in what follows in the matrix models of type $\pi:C(G)\to M_K(C(X))$, with $X$ being a compact space. There are many examples of such models, and the ``simplest'' ones are as follows:

\begin{definition}
Given a subgroup $G\subset S_N^+$, a random matrix model of type
$$\pi:C(G)\to M_K(C(X))$$
is called quasi-flat when the fibers $P_{ij}^x=\pi(u_{ij})(x)$ all have rank $\leq1$. 
\end{definition}

As a first observation, the functions $x\mapsto r_{ij}^x=rank(P_{ij}^x)$ are locally constant over $X$, so they are constant over the connected components of $X$. Thus, when $X$ is connected, our assumption is that we have $r_{ij}^x=r_{ij}\in\{0,1\}$, for any $x\in X$, and any $i,j$.

Observe that in the case $K=N$ these questions disappear, because we must have $r_{ij}^x=1$ for any $i,j$, and any $x\in X$. In this case the model is called flat. See \cite{bne}.

\begin{proposition}
Assume that we have a quasi-flat model $\pi:C(G)\to M_K(C(X))$, mapping $u_{ij}\mapsto P_{ij}$, and consider the matrix $r_{ij}=rank(P_{ij})$.
\begin{enumerate}
\item $r$ is bistochastic, with sums $K$.

\item We have $r_{ij}\leq\varepsilon_{ij}$, for any $i,j$.

\item If $G$ is quasi-transitive, with orbits of size $K$, then $r_{ij}=\varepsilon_{ij}$ for any $i,j$.

\item If $\pi$ is assumed to be flat, then $G$ must be transitive.
\end{enumerate}
\end{proposition}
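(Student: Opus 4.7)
The plan is to verify the four assertions in sequence, leveraging (i) the magic unitary relations transported through $\pi$, (ii) the rank-$\leq 1$ hypothesis, and (iii) an elementary orthogonality fact about projections.

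For (1), I fix $x\in X$ and push the identities $\sum_j u_{ij}=1$ and $\sum_i u_{ij}=1$ through $\pi$, obtaining $\sum_j P_{ij}(x)=I_K$ and $\sum_i P_{ij}(x)=I_K$. The key observation is that a finite sum of self-adjoint projections which equals a projection must be an orthogonal sum (if $P+Q\leq I$ with $P+Q$ a projection, then $(P+Q)^2=P+Q$ forces $PQ+QP=0$, hence $PQ=0$). Consequently the rank is additive: $\sum_j r_{ij}^x=\mathrm{rank}(I_K)=K$, and similarly for the columns, so $r$ (at each $x$) is bistochastic with line sums $K$.

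For (2), if $u_{ij}=0$ then $P_{ij}=\pi(u_{ij})=0$ and thus $r_{ij}=0=\varepsilon_{ij}$; otherwise $\varepsilon_{ij}=1$ and the bound $r_{ij}\leq 1$ from quasi-flatness gives $r_{ij}\leq\varepsilon_{ij}$. For (3), the quasi-transitivity assumption and Definition 2.1 tell us that $\varepsilon$ is block-diagonal with $M$ blocks equal to the $K\times K$ flat matrix, so every row of $\varepsilon$ sums to $K$. Combining with (1) yields $\sum_j(\varepsilon_{ij}-r_{ij})=K-K=0$ where each summand is nonnegative by (2), forcing $r_{ij}=\varepsilon_{ij}$ entrywise.

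For (4), the flat case is by definition $K=N$ with all $P_{ij}(x)$ rank exactly $1$ (as already noted in the remark following Definition 3.1), so $r_{ij}=1$ for all $i,j$. Part (2) then forces $\varepsilon_{ij}\geq 1$, i.e.\ $\varepsilon_{ij}=1$, for every pair $(i,j)$, which by Definition 1.4 means $G\subset S_N^+$ is transitive. None of the steps presents a genuine obstacle; the only point requiring a tiny bit of care is the orthogonality argument in (1), but this is a standard $C^*$-algebraic fact which I will simply state and use.
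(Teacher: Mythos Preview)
Your proof is correct and follows the same line as the paper's, which treats all four points as elementary consequences of the magic relations and the rank hypothesis. The only noteworthy difference is in part (1): you establish mutual orthogonality of the $P_{ij}(x)$ along each row in order to make rank additive, whereas the quicker route (implicit in the paper) is simply to apply the trace to $\sum_j P_{ij}(x)=I_K$, since $\mathrm{tr}(P_{ij}(x))=\mathrm{rank}(P_{ij}(x))$ for any projection, yielding $\sum_j r_{ij}^x=K$ immediately without any orthogonality discussion. Your orthogonality argument is valid (and the implication $PQ+QP=0\Rightarrow PQ=0$ for projections does hold), but it is not needed here.
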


\begin{proof}
These results are all elementary, the proof being as follows:

(1) This is clear from the fact that each $P^x=(P_{ij}^x)$ is bistochastic, with sums $1$.

(2) This simply comes from $u_{ij}=0\implies P_{ij}=0$.

(3) The matrices $r=(r_{ij})$ and $\varepsilon=(\varepsilon_{ij})$ are both bistochastic, with sums $K$, and they satisfy $r_{ij}\leq\varepsilon_{ij}$, for any $i,j$. Thus, these matrices must be equal, as stated.

(4) This is clear, because $rank(P_{ij})=1$ implies $u_{ij}\neq0$, for any $i,j$.
\end{proof}

In order to construct now universal quasi-flat models, we use the following standard result from \cite{ban}, which is a reformulation of Woronowicz's Tannakian duality \cite{wo2}:

\begin{proposition}
Given an inclusion $G\subset S_N^+$, with the corresponding fundamental corepresentations denoted $w\mapsto u$, we have the following formula:
$$C(G)=C(S_N^+)\Big/\Big(T\in Hom(w^{\otimes k},w^{\otimes l}),\forall k,l\in\mathbb N,\forall T\in Hom(u^{\otimes k},u^{\otimes l})\Big)$$
with the Hom-spaces at left being taken in a formal sense.
\end{proposition}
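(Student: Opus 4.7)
The plan is to derive this formula as a direct repackaging of Woronowicz's Tannakian duality \cite{wo2}, in the form it takes for quantum subgroups of $S_N^+$ as developed in \cite{ban}. Denote by $A$ the quotient on the right-hand side, and let $H\subset S_N^+$ be the associated quantum group, once we have checked that $A$ is indeed a Woronowicz algebra. The strategy is to establish the two opposite inclusions $G\subset H$ and $H\subset G$ of quantum subgroups of $S_N^+$, which together will force $C(G)=A$.

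The first inclusion $G\subset H$ is the easy, purely tautological direction. The canonical quotient $C(S_N^+)\to C(G)$, which sends the generators $w_{ij}$ to $u_{ij}$, automatically respects all the defining relations of $A$: indeed each $T$ appearing in the denominator already lies in the corresponding Hom-space of $u$ by assumption, so the relation $Tw^{\otimes k}=w^{\otimes l}T$ becomes $Tu^{\otimes k}=u^{\otimes l}T$ in $C(G)$, which holds by definition of $Hom(u^{\otimes k},u^{\otimes l})$. Hence the quotient map factors through $A$, giving a surjection $A\twoheadrightarrow C(G)$ and thus $G\subset H$.

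The reverse inclusion $H\subset G$ is where Woronowicz's theorem does the work. By construction the fundamental corepresentation $w$ of $C(H)=A$ satisfies $T\in Hom(w^{\otimes k},w^{\otimes l})$ for every $T\in Hom(u^{\otimes k},u^{\otimes l})$; in categorical language the concrete tensor $C^*$-category generated by the Hom-spaces of $w$ contains the one generated by the Hom-spaces of $u$. By the Tannakian correspondence between closed tensor subcategories of $Rep(S_N^+)$ and quantum subgroups of $S_N^+$, with the containment of categories reversing the containment of quantum groups, this immediately yields $H\subset G$.

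The only point that genuinely requires care is the well-posedness of $A$: one must verify that the ``formal'' intertwiner relations of the statement are compatible with $\Delta,\varepsilon,S$, so that $A$ really is the algebra of a quantum subgroup $H\subset S_N^+$ rather than an a priori unrelated quotient $C^*$-algebra. This is exactly the content of the Tannakian construction of \cite{wo2}, which produces $H$ from the tensor category generated by these relations together with the Hom-spaces of $w$ in $C(S_N^+)$; the formulation adapted to quantum permutation groups appears in \cite{ban}. Granting this standard fact, the two inclusions above close the argument.
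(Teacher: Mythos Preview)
Your proposal is correct and follows essentially the same approach as the paper: both arguments invoke Woronowicz's Tannakian duality \cite{wo2} (in the quantum permutation setting of \cite{ban}, \cite{mal}) to identify the quotient algebra with $C(G)$. The paper's proof simply notes that the formal intertwiner relations define a well-posed ideal and then defers the identification directly to the cited Tannakian results, whereas you unpack the same content into the two inclusions $G\subset H$ and $H\subset G$ and make the well-posedness issue for $A$ explicit; this is an expansion of the same argument rather than a different route.
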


\begin{proof}
We recall that for a Hopf algebra corepresentation $v=(v_{ij})$, the intertwining condition $T\in Hom(v^{\otimes k},v^{\otimes l})$ means by definition that we have $Tv^{\otimes k}=v^{\otimes l}T$, the tensor powers of $v=(v_{ij})$ being the corepresentations $v^{\otimes r}=(v_{i_1\ldots i_r,j_1\ldots j_r})$. 

We can formally use these notions for any square matrix over any $C^*$-algebra, and in particular, for the fundamental corepresentation of $C(S_N^+)$. Thus, the collection of relations $T\in Hom(w^{\otimes k},w^{\otimes l})$, one for each choice of an intertwiner $T\in Hom(u^{\otimes k},u^{\otimes l})$, produce an ideal of $C(S_N^+)$, and the algebra in the statement is well-defined.

This latter algebra is isomorphic to $C(G)$, due to Woronowicz's Tannakian results in \cite{wo2}. For a short, recent proof here, using basic Hopf algebra theory, see \cite{mal}.
\end{proof}

Now back to our modelling questions, it is convenient to identify the rank one projections in $M_N(\mathbb C)$ with the elements of the complex projective space $P^{N-1}_\mathbb C$. 

We first have the following observation, which goes back to \cite{bne}:

\begin{proposition}
The algebra $C(S_N^+)$ has a universal flat model, given by 
$$\pi_N:C(S_N^+)\to M_N(C(X_N))\quad,\quad\pi_N(u_{ij})=[P\mapsto P_{ij}]$$
where $X_N$ is the set of matrices $P\in M_N(P^{N-1}_\mathbb C)$ which are bistochastic with sums $1$.
\end{proposition}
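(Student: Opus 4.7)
The plan is to verify three things separately: that the assignment $u_{ij}\mapsto[P\mapsto P_{ij}]$ extends to a well-defined $*$-homomorphism $\pi_N$, that the resulting model is flat, and that it is universal, i.e.\ initial, in the category of flat matrix models of $C(S_N^+)$ over compact spaces.

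For the first point I would observe that, for each fixed $P\in X_N$, the matrix $(P_{ij})_{ij}$ is a magic unitary in $M_N(M_N(\mathbb C))$: each $P_{ij}$ is a rank-one projection by definition of $P^{N-1}_{\mathbb C}$, and the row/column sum condition $\sum_jP_{ij}=\sum_iP_{ij}=1_{M_N(\mathbb C)}$ is exactly bistochasticity with sums $1$. The universal property in Definition 1.1 then produces an evaluation morphism $\mathrm{ev}_P:C(S_N^+)\to M_N(\mathbb C)$ sending $u_{ij}\mapsto P_{ij}$. Assembling these over $X_N$ (which is compact as a closed subset of the compact space $M_N(P^{N-1}_{\mathbb C})$) yields the desired map $\pi_N:C(S_N^+)\to M_N(C(X_N))$, each entry $\pi_N(u_{ij})\in C(X_N,M_N(\mathbb C))$ being continuous because the inclusion $P^{N-1}_{\mathbb C}\hookrightarrow M_N(\mathbb C)$ is. Flatness is then tautological: $\pi_N(u_{ij})(P)=P_{ij}$ has rank exactly $1$ for every $P\in X_N$.

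For universality, I would take an arbitrary flat model $\rho:C(S_N^+)\to M_N(C(Y))$ on a compact space $Y$. Evaluating $\rho$ at $y\in Y$ gives a $*$-homomorphism $\rho_y:C(S_N^+)\to M_N(\mathbb C)$, so the matrix $(\rho_y(u_{ij}))_{ij}$ is a magic unitary whose entries are rank-one projections by flatness of $\rho$; hence it lies in $X_N$. The resulting map $\varphi:Y\to X_N$, $y\mapsto(\rho_y(u_{ij}))_{ij}$, is continuous coordinatewise and therefore continuous, and one checks immediately on generators that $\rho=(\mathrm{id}\otimes\varphi^*)\pi_N$, with $\varphi^*:C(X_N)\to C(Y)$ the pullback. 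This gives the required factorization, and uniqueness of $\varphi$ follows because its coordinates are forced by the values $\rho(u_{ij})(y)$.

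There is no serious obstacle here: the proposition is essentially an unpacking of the universal property of $C(S_N^+)$, combined with the observation that $X_N$ is by design the moduli space of flat magic unitaries of size $N$ with entries in $M_N(\mathbb C)$. The only mildly delicate step is the continuity check for the assembled map $\pi_N$, which reduces to continuity of the tautological embedding $P^{N-1}_{\mathbb C}\hookrightarrow M_N(\mathbb C)$.
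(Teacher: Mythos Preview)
Your proposal is correct and follows exactly the approach the paper takes; the paper's own proof is a single sentence (``any flat model $C(S_N^+)\to M_N(\mathbb C)$ must map the magic corepresentation $u=(u_{ij})$ into a matrix $P=(P_{ij})$ belonging to $X_N$''), and what you have written is a careful unpacking of precisely that observation, including the continuity and assembly details that the paper leaves implicit.
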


\begin{proof}
This is clear from definitions, because any flat model $C(S_N^+)\to M_N(\mathbb C)$ must map the magic corepresentation $u=(u_{ij})$ into a matrix $P=(P_{ij})$ belonging to $X_N$.
\end{proof}

Regarding now the general quasi-transitive case, we have here:

\begin{theorem}
Given a quasi-transitive subgroup $G\subset S_N^+$, with orbits of size $K$, we have a universal quasi-flat model $\pi:C(G)\to M_K(C(X))$, constructed as follows:
\begin{enumerate}
\item For $G=\underbrace{S_K^+\,\hat{*}\,\ldots\,\hat{*}\,S_K^+}_{M\ terms}$ with $N=KM$, the model space is $X_{N,K}=\underbrace{X_K\times\ldots\times X_K}_{M\ terms}$, and with $u=diag(u^1,\ldots,u^M)$ the map is $\pi_{N,K}(u^r_{ij})=[(P^1,\ldots,P^M)\mapsto P^r_{ij}]$. 

\item In general, the model space is the submanifold $X_G\subset X_{N,K}$ obtained via the Tannakian relations defining $G$.
\end{enumerate}
\end{theorem}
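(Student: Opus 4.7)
The plan is to prove the two parts in sequence, treating (2) as a Tannakian restriction of (1). Writing $G_0 = \underbrace{S_K^+\,\hat{*}\,\ldots\,\hat{*}\,S_K^+}_{M\ terms}$, the algebra $C(G_0)$ is the free product $C(S_K^+) * \ldots * C(S_K^+)$, so I would exploit that any quasi-flat model of $C(G_0)$ is determined by its restrictions to each factor, reducing (1) to Proposition 3.4. For (2), Proposition 2.2 gives $G \subset G_0$, and Proposition 3.3 describes the kernel of the quotient $q: C(G_0) \to C(G)$ as generated by Tannakian intertwiner relations; I would then cut out $X_G \subset X_{N,K}$ as the locus where these relations hold pointwise.

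For (1), I would first check that $\pi_{N,K}$ is well-defined: at each point $P = (P^1,\ldots,P^M) \in X_{N,K}$, the block $(P^r_{ij})_{i,j}$ is by definition of $X_K$ a magic unitary in $M_K(\mathbb C)$ with rank-$\leq 1$ entries, so for each $r$ the assignment $u^r_{ij} \mapsto [P \mapsto P^r_{ij}]$ extends to a $*$-homomorphism $C(S_K^+) \to M_K(C(X_{N,K}))$, and the universal property of the free product assembles these into $\pi_{N,K}$. Quasi-flatness is immediate. For universality, given a quasi-flat model $\rho: C(G_0) \to M_K(C(Y))$, I would consider its restriction $\rho_r: C(S_K^+) \to M_K(C(Y))$ to the $r$-th free factor. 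Since $\sum_j \rho_r(u^r_{ij}) = 1_K$ has rank $K$ and each of the $K$ summands has rank $\leq 1$, each $\rho_r(u^r_{ij})$ must have rank exactly $1$; hence $\rho_r$ is flat, and by Proposition 3.4 it is classified by a continuous map $f_r: Y \to X_K$. The combined $f = (f_1,\ldots,f_M): Y \to X_{N,K}$ gives the desired factorization $\rho = (\mathrm{id} \otimes f^*) \circ \pi_{N,K}$.

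For (2), I would define $X_G \subset X_{N,K}$ as the closed subspace of points $P$ at which the block-diagonal magic unitary $\mathrm{diag}(P^1,\ldots,P^M)$ satisfies all intertwiner relations $T \in \mathrm{Hom}(u^{\otimes k}, u^{\otimes l})$ for $T$ in the Tannakian category of $G$. By construction, the composition of $\pi_{N,K}$ with the restriction $C(X_{N,K}) \to C(X_G)$ annihilates $\ker q$ and hence descends to $\pi: C(G) \to M_K(C(X_G))$, which is quasi-flat. For universality, any quasi-flat $\rho: C(G) \to M_K(C(Y))$ composes with $q$ to give a quasi-flat model of $C(G_0)$, classified by some $f: Y \to X_{N,K}$; since $\rho \circ q$ kills the Tannakian ideal, the image of $f$ lies in $X_G$, giving the required factorization through $\pi$.

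The only non-formal ingredient is the rank-counting step in (1) that upgrades a quasi-flat free-factor restriction to a flat model, licensing the appeal to Proposition 3.4; everything else is a direct combination of Tannakian duality with the universal property of free products, and I do not foresee conceptual obstacles beyond this.
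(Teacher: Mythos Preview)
Your proposal is correct and follows essentially the same route as the paper. The rank-counting step you single out is precisely the content of Proposition~3.2(3), which the paper invokes to handle (1), and your treatment of (2) via the Tannakian ideal and factorization through $X_G$ matches the paper's construction of $C(X_G)$ as a quotient of $C(X_{N,K})$ by the intertwiner relations.
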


\begin{proof}
This result is known since \cite{ban}, \cite{bne} in the flat case, the idea being to use Proposition 3.3 and Proposition 3.4. In general, the proof is similar:

(1) This follows from Proposition 3.4, by using Proposition 3.2 (3) above, which tells us that the 0 entries of the model must appear exactly where $u=(u_{ij})$ has 0 entries.

(2) Assume that $G\subset S_N^+$ is quasi-transitive, with orbits of size $K$. We have then an inclusion $G\subset\underbrace{S_K^+\,\hat{*}\,\ldots\,\hat{*}\,S_K^+}_{M\ terms}$, and in order to construct the universal quasi-flat model for $C(G)$, we need a universal solution to the following factorization problem:
$$\begin{matrix}
C(\underbrace{S_K^+\,\hat{*}\,\ldots\,\hat{*}\,S_K^+}_{M\ terms})&\to&M_K(C(X_{N,K}))\\
\\
\downarrow&&\downarrow\\
\\
C(G)&\to&M_K(C(X_G))
\end{matrix}$$

But, the solution to this latter question is given by the following construction, with the Hom-spaces at left being taken as usual in a formal sense:
$$C(X_G)=C(X_{N,K})\Big/\Big(T\in Hom(P^{\otimes k},P^{\otimes l}),\forall k,l\in\mathbb N,\forall T\in Hom(u^{\otimes k},u^{\otimes l})\Big)$$

With this result in hand, the Gelfand spectrum of the algebra on the left is then an algebraic submanifold $X_G\subset X_{N,K}$, having the desired universality property.
\end{proof}

Observe that talking about quasi-flat models for quantum groups which are not necessarily quasi-transitive perfectly makes sense. The universal model spaces can be constructed as above, and this was in fact already discussed in \cite{ban}, but no one guarantees that in the non-quasi-transitive case, the model spaces are non-empty. So, we prefer to restrict the attention to the quasi-transitive case, and state Theorem 3.5 as it is.

\section{Inner faithfulness}

We formulate in what follows a number of conjectures. We first review the notions of inner faithfulness and stationarity, from \cite{ban}, \cite{bbi}, \cite{bfs}. Following \cite{bbi}, we first have:

\begin{definition}
Let $\pi:C(G)\to M_K(C(X))$ be a matrix model. 
\begin{enumerate}
\item The Hopf image of $\pi$ is the smallest quotient Hopf $C^*$-algebra $C(G)\to C(H)$ producing a factorization of type $\pi:C(G)\to C(H)\to M_K(C(X))$.

\item When the inclusion $H\subset G$ is an isomorphism, i.e. when there is no non-trivial factorization as above, we say that $\pi$ is inner faithful.
\end{enumerate}
\end{definition}

Observe that when $G=\widehat{\Gamma}$ is a group dual, $\pi$ must come from a group representation $\rho:\Gamma\to C(X,U_K)$, and the above factorization is the one obtained by taking the image, $\rho:\Gamma\to\Gamma'\subset C(X,U_K)$. Thus $\pi$ is inner faithful when $\Gamma\subset C(X,U_K)$.

Also, given a compact group $G$, and elements $g_1,\ldots,g_K\in G$, we have a representation $\pi:C(G)\to\mathbb C^K$, given by $f\to(f(g_1),\ldots,f(g_K))$. The minimal factorization of $\pi$ is then via $C(G')$, with $G'=\overline{<g_1,\ldots,g_K>}$, and $\pi$ is inner faithful when $G=G'$.

In practice, $X$ is often a compact Lie group, or a compact homogeneous space, or a more general compact probability space. And here, we have the following result:

\begin{proposition}
Given an inner faithful model $\pi:C(G)\to M_K(C(X))$, with $X$ being assumed to be a compact probability space, we have
$$\int_G=\lim_{k\to\infty}\frac{1}{k}\sum_{r=1}^k\int_G^r$$
where $\int_G^r=(\varphi\circ\pi)^{*r}$, with $\varphi=tr\otimes\int_X$ being the random matrix trace.
\end{proposition}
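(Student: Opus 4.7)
The plan is to show that the Ces\`aro averages of $\varphi \circ \pi$ converge in the weak-$*$ topology to the Haar state of the Hopf image of $\pi$, and then to use the inner faithfulness hypothesis to identify this Hopf image with $G$ itself. This is the standard strategy going back to \cite{ban}, \cite{bbi}, \cite{bfs}, adapted to the present setting.

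Write $\psi = \varphi \circ \pi$, a state on $C(G)$. First I would establish convergence. Via the Peter-Weyl decomposition, the dense $*$-subalgebra of matrix coefficients of $C(G)$ splits as a direct sum of finite-dimensional isotypic components $\mathcal{C}(G)_v$ indexed by the irreducible corepresentations $v$ of $G$. Convolution with $\psi$ preserves each $\mathcal{C}(G)_v$ and acts there as a linear contraction $T_v$. The classical Ces\`aro theorem for contractions on a finite-dimensional space gives convergence of $k^{-1}\sum_{r=1}^k T_v^r$ to the spectral projection onto $\ker(T_v - I)$, and assembling these produces a state $\Psi = \lim_k k^{-1}\sum_{r=1}^k \psi^{*r}$ on $C(G)$ satisfying the idempotence relation $\Psi * \Psi = \Psi$.

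Next, by the standard correspondence between idempotent states and Haar states of closed quantum subgroups, the null set $J = \{a \in C(G) : \Psi(a^*a) = 0\}$ is a Hopf $C^*$-ideal, and the quotient $C(H) = C(G)/J$ is a compact quantum subgroup of $G$ on which $\Psi$ descends to the Haar state $\int_H$. It remains to match $H$ with the Hopf image $H_\pi$ of $\pi$. The easy inclusion $H \subseteq H_\pi$ follows because $\psi$, and hence each convolution power $\psi^{*r}$, and hence $\Psi$, factors through $C(H_\pi)$. For the reverse inclusion one must check that $\pi$ itself descends to a map on $C(H)$: given $a \in J$, the bi-invariance $\Psi = \Psi * \psi = \psi * \Psi$ together with the positivity of $\varphi = tr \otimes \int_X$ on $M_K(C(X))$ forces $\pi(a) = 0$. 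Minimality of the Hopf image then yields $H_\pi \subseteq H$.

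Combining these, inner faithfulness of $\pi$ forces $H_\pi = G$, hence $H = G$, hence $\Psi = \int_G$, which is the asserted formula. The main obstacle is the third step, where one must transfer information from the scalar state $\varphi \circ \pi$ back to the matrix-valued map $\pi$; this is essentially the content of the Hopf-image-as-invariance-subgroup characterization from \cite{bfs}, and it is where the specific product structure $\varphi = tr \otimes \int_X$ (and positivity on $M_K(C(X))$) enters the argument decisively.
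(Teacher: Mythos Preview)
Your outline follows the strategy of the references that the paper itself invokes: the paper gives no argument of its own here, merely citing \cite{bfs} for the case $X=\{\cdot\}$ and \cite{wa4} for the general case. So there is no independent ``paper's proof'' to compare against, and your sketch is essentially a summary of what those references do.

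That said, there is one genuine misstatement in your step~2. You write ``by the standard correspondence between idempotent states and Haar states of closed quantum subgroups, the null set $J$ is a Hopf $C^*$-ideal''. There is no such standard correspondence: one of the main points of \cite{fsk} is precisely that idempotent states on compact quantum groups need \emph{not} arise as Haar states of quantum subgroups. The null space of a state is always a closed left ideal, but it is a two-sided Hopf ideal only under additional hypotheses. So you cannot simply invoke a general principle here; you must prove that \emph{this particular} $\Psi$ is of Haar type, and this is exactly the nontrivial content of \cite{bfs} (in the finite-dimensional target case) and \cite{wa4} (in general).

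You are right that the crux lies in your step~3, and you correctly isolate the mechanism: the absorption relations $\Psi*\psi=\psi*\Psi=\Psi$ combined with the faithfulness of $\varphi=tr\otimes\int_X$ on $M_K(C(X))$ are what force $\pi$ to factor through the quotient by the null space of $\Psi$, and what force that null space to be a genuine Hopf ideal. But the passage ``forces $\pi(a)=0$'' hides a real computation (roughly: from $\Psi(a^*a)=0$ one must extract $\psi(a^*a)=0$, which needs more than bi-invariance alone and uses an operator-valued version of the argument). Since you ultimately defer this to \cite{bfs}, your write-up lands in the same place as the paper's own citation, just with more of the architecture made visible; but you should not present step~2 as a formality, because it is where the work actually happens.
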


\begin{proof}
This was proved in \cite{bfs} in the case $X=\{\cdot\}$, using idempotent state theory from \cite{fsk}. The general case was recently established in \cite{wa4}.
\end{proof}

The above result can be used as a criterion for detecting the inner faithfulness. To be more precise, $\pi$ is inner faithful precisely when the above formula holds. See \cite{bfs}.

Following \cite{ban}, we call a matrix model stationary when the Ces\`aro limiting convergence in Proposition 4.2 is stationary. In other words, we have the following definition:

\begin{definition}
A matrix model $\pi:C(G)\to M_K(C(X))$, with $X$ assumed to be a compact probability space, is called stationary when: 
$$\int_G=\left(tr\otimes\int_X\right)\pi$$
In the general case, where $X$ is only assumed to be a compact space, $\pi$ will be called stationary if it is stationary with respect to some probability measure on $X$.
\end{definition}

There are many interesting examples of such models, see \cite{ban}. However, the stationarity condition is a very strong assumption, and we have the following result, from \cite{ban}:

\begin{proposition}
Let $\pi:C(G)\to M_K(C(X))$ be a stationary model. Then,
\begin{enumerate}
\item $\pi$ is faithful.

\item $C(G)$ is a type I C*-algebra, hence the discrete dual $\Gamma=\widehat{G}$ is amenable.
\end{enumerate}
\end{proposition}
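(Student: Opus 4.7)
The plan is to obtain faithfulness of $\pi$ directly from the stationarity identity $\int_G = (tr\otimes\int_X)\pi$, to then use the resulting embedding of $C(G)$ into a subhomogeneous algebra to deduce type~I, and finally to invoke the standard quantum-group result that type~I forces amenability of the dual.

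For (1), the key input is that $\varphi = tr\otimes\int_X$ is a faithful tracial state on $M_K(C(X))$: the normalized trace on $M_K(\mathbb C)$ is faithful, and after replacing $X$ with the support of $\int_X$ (which does not affect stationarity) the probability measure is faithful as well, so the tensor state $\varphi$ is faithful. If $\pi(a)=0$, then $\pi(a^*a)=0$, hence
$$\int_G(a^*a)=\varphi(\pi(a^*a))=0$$
by stationarity; faithfulness of the Haar state on $C(G)$ --- which holds on the reduced completion, and the presence of a stationary model ensures we may work with that completion, since the GNS null ideal of $\int_G$ must inject into the trivial GNS null ideal of $\varphi$ --- then yields $a=0$.

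For (2), item (1) realizes $C(G)$ as a C*-subalgebra of the $K$-subhomogeneous algebra $M_K(C(X))$, whose irreducible representations all have dimension at most $K$, so that $M_K(C(X))$ is of type~I; and C*-subalgebras of separable type~I C*-algebras are again of type~I, so $C(G)$ is type~I. The final implication, type~I of $C(G)$ implies amenability of $\widehat G$, is the quantum analogue of the classical fact that $C^*_r(\Gamma)$ being type~I forces $\Gamma$ to be virtually abelian and hence amenable, and may be borrowed from the standard literature, e.g.\ \cite{ban}.

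The main obstacle is the faithfulness statement in (1): it tacitly encodes the coincidence of the universal and reduced completions of $C(G)$, and care is required to agree that $C(G)$ is understood as the reduced completion, so that the Haar state is faithful. Once the convention is clear, the remainder is a brief chain of formal manipulations with the stationarity identity, the subhomogeneity observation, and the cited type~I $\Rightarrow$ amenability principle.
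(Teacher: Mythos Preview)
Your argument has the correct ingredients but in a circular order. In step (1) you infer $a=0$ from $\int_G(a^*a)=0$, which requires the Haar state to be faithful on the \emph{full} algebra $C(G)$; that is exactly co-amenability, which you only obtain afterwards in step (2). You acknowledge this in the final paragraph and propose to resolve it by declaring $C(G)$ to mean the reduced completion, but in this paper $C(G)$ is by definition the universal $C^*$-algebra (Definition~1.1), so that convention would change the statement rather than prove it.

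The paper avoids the circularity by reversing the implication in step (1): from $\int_G yy^*=0$ it deduces $\varphi(\pi(yy^*))=0$, hence $\pi(y)=0$ by faithfulness of $\varphi$ on $M_K(C(X))$. This shows $\ker\lambda\subset\ker\pi$, so $\pi$ factors as $C(G)\to C(G)_{red}\to M_K(C(X))$. One then checks that the second map is injective (your direction of the argument, applied on the reduced algebra where Haar \emph{is} faithful), so $C(G)_{red}$ embeds in $M_K(C(X))$ and is therefore type I, hence nuclear, hence $G$ is co-amenable. Only at this point does one conclude $C(G)=C(G)_{red}$ and obtain faithfulness of $\pi$ on the full algebra. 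Rearranging your proof in this order---embed the reduced algebra first, deduce co-amenability, then lift faithfulness to $C(G)$---makes it correct and essentially identical to the paper's.
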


\begin{proof}
We use the basic theory of amenability for discrete quantum groups, from \cite{ntu}. Assuming that $\pi$ is stationary, for any $x\in C(G)$ we have:
$$\int_Gx=0\implies\left(tr\otimes\int_X\right)\pi(x)=0$$

In particular, with $x=yy^*$, and by using the fact that $\int_X$ is by definition faithful, $X$ being a compact probability space, we obtain that for any $y\in C(G)$ we have: 
$$\int_Gyy^*=0\implies\pi(yy^*)=0\implies\pi(y)=0$$

Now since the elements satisfying $\int_Gyy^*=0$ are precisely those in the kernel of the quotient map $\lambda:C(G)\to C(G)_{red}$, we obtain a factorization of $\pi$, as follows:
$$\pi:C(G)\to C(G)_{red}\to M_K(C(X))$$

Our claim now is that the map on the right, say $\rho$, is an inclusion. Indeed, let $x\in\ker(\rho)$, and let us pick a lift $y\in C(G)$ of this element $x\in C(G)_{red}$. We have then:
$$\rho(x)=0\implies\pi(y)=0\implies\pi(yy^*)=0\implies\int_Gyy^*=0\implies\lambda(y)=0\implies x=0$$

Thus we have an inclusion $C(G)_{red}\subset M_K(C(X))$, and so $\pi$ factorizes as follows:
$$\pi:C(G)\to C(G)_{red}\subset M_K(C(X))$$

Now since $C(G)_{red}$ must be of type I, and therefore nuclear, the quantum group $G$ must be co-amenable, and so $\pi$ must be faithful, and we are done with both (1,2).
\end{proof}

We refer to \cite{ban}, \cite{bbi}, \cite{bfs}, \cite{chi}, \cite{fsk}, \cite{wa4} for more theory and examples, of algebraic and analytic nature, regarding the notions of inner faithfulness and stationarity.

We recall from \cite{wo1} that any finitely generated group $\Gamma=<g_1,\ldots,g_N>$ produces a closed subgroup $\widehat{\Gamma}\subset U_N^+$, with fundamental corepresentation $u_{ij}=\delta_{ij}g_i$, and that the group dual subgroups $\widehat{\Gamma}\subset U_N^+$ all appear in this way, modulo a conjugation of the fundamental corepresentation $u=(u_{ij})$ by a unitary $U\in U_N$. Here is now a first result about stationarity, which is essentially a reformulation of Thoma's theorem \cite{tho}:

\begin{theorem}[Thoma]
Given a group dual $G=\widehat{\Gamma}\subset U_N^+$, the following are equivalent:
\begin{enumerate}
\item $C(G)$ is of type I.

\item $C(G)$ has a stationary model.

\item $C(G)$ has a stationary model, over an homogeneous space.

\item $\Gamma$ is virtually abelian.
\end{enumerate}
\end{theorem}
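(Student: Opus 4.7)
The plan is to close the cycle $(4)\Rightarrow(3)\Rightarrow(2)\Rightarrow(1)\Rightarrow(4)$. Three of these four arrows are essentially free: $(3)\Rightarrow(2)$ is tautological, since a homogeneous space is automatically a compact probability space for its unique invariant measure; $(2)\Rightarrow(1)$ is Proposition 4.4; and $(1)\Leftrightarrow(4)$ is precisely Thoma's classical theorem \cite{tho}, asserting that $C^*(\Gamma)$ is of type I if and only if $\Gamma$ is virtually abelian. So the entire substance lies in constructing, in case $(4)$, a stationary model over a homogeneous space.

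For this, suppose $\Gamma$ contains an abelian subgroup of finite index; by replacing it with its normal core we obtain a normal abelian subgroup $\Gamma_0\triangleleft\Gamma$ of some finite index $K$. Take the homogeneous space to be $X=\widehat{\Gamma_0}$, a compact abelian group under its normalized Haar measure. Fix a system $h_1,\ldots,h_K$ of coset representatives for $\Gamma_0$ in $\Gamma$, and for each character $\chi\in X$ let $\pi_\chi=\mathrm{Ind}_{\Gamma_0}^\Gamma\chi$ be the corresponding induced representation, realized on $\mathbb{C}^K$ in the basis given by the $h_i$. Since the matrix coefficients $\langle\pi_\chi(g)e_j,e_i\rangle$ equal $\chi(h_i^{-1}gh_j)$ when $h_i^{-1}gh_j\in\Gamma_0$ and vanish otherwise, the assignment $\chi\mapsto\pi_\chi(g)$ is continuous, and defines a group representation $\rho:\Gamma\to C(X,U_K)$. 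Together with $u_{ij}=\delta_{ij}g_i$ this induces a matrix model $\pi:C(\widehat{\Gamma})\to M_K(C(X))$.

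To verify stationarity I would invoke Mackey's induced character formula, which gives
$$K\cdot\mathrm{tr}(\pi_\chi(g))=\sum_{i\,:\,h_i^{-1}gh_i\in\Gamma_0}\chi(h_i^{-1}gh_i),$$
and then integrate in $\chi$ using Plancherel on $\Gamma_0$, namely $\int_X\chi(h)\,d\chi=\delta_{h,e}$, to obtain
$$\int_X\mathrm{tr}(\pi_\chi(g))\,d\chi=\frac{1}{K}\#\{i:h_i^{-1}gh_i=e\}=\delta_{g,e},$$
which matches the Haar integral $\int_{\widehat{\Gamma}}u_g$ on the canonical generator $u_g$. Extending by linearity and continuity gives stationarity on all of $C(\widehat{\Gamma})$.

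The main obstacle is essentially locating the right classical input: Thoma \cite{tho} provides $(1)\Leftrightarrow(4)$, and the Mackey machine applied to the dual of the abelian part supplies the homogeneous-space model in $(4)\Rightarrow(3)$. Beyond these two citations, everything reduces to orthogonality of characters on $\Gamma_0$ and the trivial fact that a compact group is a homogeneous space over itself, so no further analytic input is needed.
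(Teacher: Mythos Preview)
Your proof is correct and follows essentially the same route as the paper: the nontrivial arrow $(4)\Rightarrow(3)$ is handled in both cases by inducing characters from the finite-index abelian subgroup up to $\Gamma$, computing the trace via the Frobenius/Mackey character formula, and integrating over $\widehat{\Lambda}$ using orthogonality of characters to obtain $\delta_{g,e}$. The only cosmetic difference is that you pass to the normal core first, which is harmless but not actually used in your computation.
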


\begin{proof}
Here $(1)\implies(4)$ is Thoma's theorem \cite{tho} and $(3)\implies(2)\implies(1)$ are trivial implications. We therefore only have to prove $(4)\implies(3)$.

Let $\Lambda < \Gamma$ be an abelian subgroup of finite index and let $K = [\Lambda : \Gamma]$. We define a matrix model $\pi : C^*(\Gamma)\to M_K(C(\widehat{\Lambda}))$ by:
$$\pi(\gamma)(\chi) = \mathrm{Ind}_\Lambda^\Gamma(\chi)(\gamma)$$

To see that this model is faithful, take $\gamma\in \Gamma$ and recall that the character $\psi$ of $\mathrm{Ind}_\Lambda^\Gamma(\chi)$ is given by:
$$\psi(\gamma) = Tr(\mathrm{Ind}_\Lambda^\Gamma(\chi)) = \sum_{x\in \Gamma/\Lambda}\delta_{x^{-1}\gamma x\in \Lambda}\chi(x^{-1}\gamma x)$$

Here $Tr$ is the usual (non-normalized) trace on $M_{K}(\mathbb C)$. Thus:
$$\left(tr\otimes \int_{\widehat{\Lambda}}\right)\pi(\gamma) = \frac{1}{K}\sum_{x\in \Gamma/\Lambda}\delta_{x^{-1}\gamma x\in \Lambda}\int_{\widehat{\Lambda}}\chi(x^{-1}\gamma x)d\chi$$

Since the integral over all characters is the indicator function of the trivial element, the expression above equals $\delta_{\gamma, e}$ and the model is stationary.
\end{proof}

Let us formulate now our main two conjectures, regarding the universal quasi-flat models for the quantum permutation groups. We first have:

\begin{conjecture}
Assuming that $G\subset S_N^+$ is quasi-transitive, with orbits of size $K$, and that $\Gamma=\widehat{G}$ satisfies a suitable ``virtual abelianity'' condition, we have:
\begin{enumerate}
\item The universal quasi-flat model space $X_G$ is an homogeneous space.

\item The corresponding model $\pi:C(G)\to M_K(C(X_G))$ is stationary. 
\end{enumerate}
\end{conjecture}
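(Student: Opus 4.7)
The plan is to adapt the induced-representation construction from the proof of Theorem 4.5 to the quasi-transitive setting, using the product embedding $G \subset S_K^+ \,\hat{*}\, \cdots \,\hat{*}\, S_K^+$ from Proposition 2.2 to reduce the structure to one orbit-block at a time. The first task is to make the hypothesis precise: the natural candidate for ``virtual abelianity'' in the group-dual case is that $\Gamma = \widehat{G}$ admits a cocommutative finite-index normal quantum subgroup $\Lambda$ of index exactly $K$, compatible with the orbit decomposition of Theorem 1.7; in the classical case the analogous condition should read as $G$ acting on each orbit via an abelian quotient of order $K$.

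Granting such data, the second step is to identify $X_G$ with a compact homogeneous space. One builds a candidate by letting a classical compact group $H_G$, assembled from $\widehat{\Lambda}$ together with the permutation symmetries of the $M$ orbit-blocks, act on $X_{N,K} = X_K \times \cdots \times X_K$ by simultaneous conjugation and block-permutation, and showing that the $H_G$-orbit of a distinguished base point, namely the matrix-coefficient array of the induced representation of the trivial character in the spirit of the formula $\pi(\gamma)(\chi) = \mathrm{Ind}_\Lambda^\Gamma(\chi)(\gamma)$ appearing in Theorem 4.5, coincides with the Tannakian submanifold $X_G \subset X_{N,K}$ of Theorem 3.5(2). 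Stationarity of $\pi : C(G) \to M_K(C(X_G))$ is then verified by the same trace identity as in the proof of Theorem 4.5: both $\int_G$ and $(tr \otimes \int_{X_G}) \pi$ are central functionals, and matching them on characters reduces to integrating the explicit character formula for induced representations against the Haar measure on $\widehat{\Lambda}$, which collapses to a delta function at the neutral element. Inner faithfulness is then automatic by Proposition 4.4.

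The main obstacle is formulating the correct virtual-abelianity hypothesis in full generality. Pure group duals are governed by Theorem 4.5 and classical groups by the elementary analogue, but for mixed or genuinely non-coamenable $G$ it is not clear how to phrase the assumption so that the Tannakian cut-down $X_G \subset X_{N,K}$ is truly homogeneous. A secondary difficulty is verifying that the candidate group $H_G$ preserves the Tannakian relations defining $X_G$: in the non-product-type case, such as the diagonal $S_2 \subset S_2 \times S_2 \subset S_4$ example following Proposition 2.3, those relations mix the blocks and are not obviously invariant under independent rotation of each $X_K$-factor. I expect this invariance check, together with the precise formulation of the hypothesis, to be the main content of any proof, and I suspect that the natural hypothesis will have to be strengthened beyond bare quasi-transitivity plus virtual abelianity of $\widehat{G}$ to include some compatibility between the abelian substructure and the product decomposition of Proposition 2.2.
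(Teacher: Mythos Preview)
The statement you are attempting to prove is labeled as a \emph{Conjecture} in the paper, and the paper does not provide a proof. Immediately after stating it, the authors write: ``We do not know yet what the `virtual abelianity' condition should mean.'' The evidence they offer consists of the special cases treated elsewhere in the paper --- Thoma's theorem (Theorem 4.5) for group duals and Theorem 5.7 for classical groups --- but no general argument is given or claimed. So there is no proof in the paper to compare your proposal against.

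Your proposal is honest about this: you explicitly flag that ``the main obstacle is formulating the correct virtual-abelianity hypothesis in full generality,'' and you identify the Tannakian-invariance check as the expected heart of any argument. These are exactly the obstacles the authors leave open. What you have written is therefore not a proof but a research outline, and a reasonable one --- the induced-representation template from Theorem 4.5 is the natural starting point, and the diagonal $S_2 \subset S_4$ example is a good test case for the block-mixing issue. But you should not present this as a proof of the conjecture; at best it is a programme, and the paper itself does not go further than you do on the general case.
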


The evidence here comes from Thoma's theorem, in its conjectural stronger form presented above, as well from a number of explicit verifications, to be performed below, and notably from a verification in the case where $G$ is classical.

We do not know yet what the ``virtual abelianity'' condition should mean. When $\Gamma$ is classical, as in Thoma's theorem, this condition states that we must have an abelian subgroup $\Lambda <\Gamma$ such that $F=\Gamma/\Lambda$ is finite.

Regarding now the notion of inner faithfulness, we have here:

\begin{conjecture}
Assuming that $G\subset S_N^+$ is quasi-transitive, and satisfies in addition a  suitable ``uniformity'' condition, the universal quasi-flat model is inner faithful.
\end{conjecture}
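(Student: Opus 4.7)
The plan is to characterize the Hopf image $H \subset G$ of the universal quasi-flat model $\pi : C(G) \to M_K(C(X_G))$ via Tannakian duality, and then show that this Hopf image coincides with $G$ itself. By Proposition 3.3, if $w$ denotes the fundamental corepresentation of $C(H)$, then
$$\mathrm{Hom}(w^{\otimes k},w^{\otimes l}) = \bigcap_{x \in X_G}\left\{ T : T P(x)^{\otimes k} = P(x)^{\otimes l} T \right\},$$
where $P(x) = (\pi(u_{ij})(x))$ is the quasi-magic matrix at the point $x \in X_G$. Thus inner faithfulness is equivalent to the statement that this intersection equals $\mathrm{Hom}(u^{\otimes k}, u^{\otimes l})$ for all $k,l$, i.e.\ that the universal family of fibers is Tannakianly rigid enough to detect exactly the relations defining $G \subset S_N^+$.

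First I would dispatch the product-type case of Proposition 2.3. When $G$ lifts transitive factors $G_i \subset S_K^+$, the model space decomposes as $X_G = X_{G_1} \times \cdots \times X_{G_M}$ and $P(x)$ becomes block-diagonal with blocks $P^i(x_i)$; inner faithfulness for $G$ then reduces to inner faithfulness for each $G_i$ in the transitive flat setting of \cite{bne}, combined with a routine Tannakian argument that intersections of intertwiner categories of the block pieces produce the full dual-free-product category of the ambient $S_K^+ \hat{*} \cdots \hat{*} S_K^+$. This is where the ``uniformity'' condition must enter: at a minimum it should guarantee that each $G_i$ is itself transitive on its block, so that non-product quasi-transitive examples like the diagonal $S_2 \subset S_2 \times S_2 \subset S_4$ are either excluded or handled separately.

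The main obstacle, and the reason the statement is phrased conjecturally, is to formulate ``uniformity'' precisely. For genuinely non-product examples the fibers of the universal quasi-flat model may parametrize a proper Tannakian subcategory and the naive inclusion $H \subset G$ can be strict. I would test candidate formulations first on the group dual case $\widehat{\Gamma} \subset S_N^+$ coming from quotients $\mathbb{Z}_K^{*M} \to \Gamma \to \mathbb{Z}_K^M$, as in Proposition 2.5: there inner faithfulness translates into injectivity of an associated representation $\Gamma \to C(X_G, U_K)$, which can be checked by exhibiting, for each nontrivial $\gamma \in \Gamma$, a point of $X_G$ at which $\rho(\gamma) \neq 1$. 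In the classical case $G \subset S_N$ the verification should follow from the transitive flat-model analysis of \cite{bne} applied block by block, together with the dual-free-product structure supplied by Proposition 2.2.

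Once inner faithfulness is established at the level of a dense family of fibers, the passage to the $C^*$-algebraic statement is provided by the criterion recalled in Proposition 4.2: checking the Cesàro convergence $\int_G = \lim \tfrac{1}{k}\sum_r (\varphi \circ \pi)^{*r}$ on a generating set of characters of $G$. The hardest single step is unquestionably the isolation of the correct uniformity hypothesis — strong enough to rule out hidden Tannakian symmetries in the fibers, yet weak enough to cover the group dual families of Sections 7--8 — and I expect the eventual formulation to emerge from a careful comparison between the Tannakian generators implicit in the family $\{P(x)\}_{x \in X_G}$ and the defining relations of $G$ block by block.
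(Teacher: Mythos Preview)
The statement you are addressing is Conjecture 4.7, and the paper contains no proof of it. Immediately after stating it, the authors write: ``We do not know yet what the precise `uniformity' condition should be.'' What the paper actually does is verify the conjecture in a handful of concrete cases: classical quasi-transitive $G\subset S_N$ (Theorem 5.7, which in fact establishes stationarity, hence faithfulness), and several families of group duals $\widehat{\Gamma}$ (Theorem 7.2, Proposition 7.3, Theorem 8.3, Proposition 8.4). Those verifications proceed by direct, case-specific computations --- for the group duals, one explicitly computes $tr\circ\pi(\gamma)$ as a noncommutative polynomial in unitary matrix entries and argues it cannot be identically $1$ --- not by the general Tannakian rigidity scheme you outline.

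Your proposal is therefore not a proof but a research program, and you seem aware of this (``the main obstacle, and the reason the statement is phrased conjecturally''). Two remarks on its content. First, your claim that for product-type $G$ the model space factors as $X_G=X_{G_1}\times\cdots\times X_{G_M}$ needs care: this holds for dual free products $G_1\,\hat{*}\,\cdots\,\hat{*}\,G_M$ but not in general for direct products or other product-type subgroups, where additional Tannakian relations cut down the model space (compare the classical description $X_G=E_K\times L_{N,K}^G$ in Proposition 5.4, which is not such a product). Second, and more seriously, the paper's Proposition 8.1 exhibits a quasi-transitive group dual $(\mathbb Z_K*\mathbb Z_K)\times\mathbb Z_K$ for which \emph{no} quasi-flat model can be inner faithful; any eventual ``uniformity'' condition must exclude this example, and your outline does not engage with this obstruction. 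In short, there is no proof in the paper to compare against, and your sketch, while pointing in reasonable directions, does not close the gap that makes this a conjecture.
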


We do not know yet what the precise ``uniformity'' condition should be. We believe that all this is related to the notion of easiness \cite{fre}, \cite{rwe}, and this will be confirmed by some of the verifications performed below, but in general, we have no results.

Finally, let us mention that in the transitive case, the very first question here concerns $G=S_N^+$ itself, and the problem here is difficult, and open. Indeed, assuming that the conjecture holds, $S_N^+$ would follow to be inner linear (in a parametric sense) and we would therefore obtain that $L^\infty(S_N^+)$ has the Connes embedding property. Thus, we will have here a solution to an old open problem. For some comments here, see \cite{bfs}, \cite{bne}, \cite{bcv}.

\section{The classical case}

In this section we discuss the classical case, $G\subset S_N$. Our question is as follows: assuming that $G$ is quasi-transitive, with orbits of size $K$, when do we have a stationary model $\pi:C(G)\to M_K(C(X))$, for some compact probability space $X$? 

We will use the following notion:

\begin{definition}
A ``sparse Latin square'' is a square matrix $L\in M_N(*,1,\ldots,K)$ having the property that each of its rows and columns consists of a permutation of the numbers $1,\ldots,K$, completed with $*$ entries.
\end{definition}

In the case $K=N$, where there are no $*$ symbols, we recover the usual Latin squares. In general, however, the combinatorics of these matrices seems to be more complicated than that of the usual Latin squares. Here are a few examples of such matrices:
$$\begin{pmatrix}1&2&*\\2&*&1\\ *&1&2\end{pmatrix}\quad,\quad
\begin{pmatrix}1&2&*&*\\ 2&*&1&*\\ *&1&*&2\\ *&*&2&1\end{pmatrix}\quad,\quad
\begin{pmatrix}1&2&*&*\\ 2&1&*&*\\ *&*&1&2\\ *&*&2&1\end{pmatrix}\quad,\quad
\begin{pmatrix}1&2&*&*&*\\ 2&*&1&*&*\\ *&1&2&*&*\\ *&*&*&1&2\\ *&*&*&2&1
\end{pmatrix}$$

With this notion in hand, the result that we need is as follows:

\begin{proposition}
The quasi-flat representations $\pi:C(S_N)\to M_K(\mathbb C)$ appear as
$$u_{ij}\mapsto P_{L_{ij}}$$ 
where $P_1,\ldots,P_K\in M_K(\mathbb C)$ are rank $1$ projections, summing up to $1$, and where $L\in M_N(*,1,\ldots,K)$ is a sparse Latin square, with the convention $P_*=0$. 
\end{proposition}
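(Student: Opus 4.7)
The plan is to exploit that $C(S_N)$ is a finite-dimensional commutative $C^*$-algebra, so any $*$-representation $\pi: C(S_N)\to M_K(\mathbb C)$ has commutative image and therefore admits simultaneous diagonalization. Concretely, I would choose a unitary $U\in U_K$ with $U\pi(\cdot)U^*$ taking values in the diagonal subalgebra $\mathbb C^K\subset M_K(\mathbb C)$. Each of the $K$ coordinate projections then yields a character of $C(S_N)$, i.e., evaluation at some permutation, producing $\sigma_1,\ldots,\sigma_K\in S_N$ such that
\[
U\pi(f)U^*=\mathrm{diag}\bigl(f(\sigma_1),\ldots,f(\sigma_K)\bigr)\quad\text{for all }f\in C(S_N).
\]

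Next I would specialize to the magic generators. Since $u_{ij}=\chi(\{\sigma:\sigma(j)=i\})$, the matrix $UP_{ij}U^*$ is diagonal with $k$-th entry equal to $1$ if $\sigma_k(j)=i$ and $0$ otherwise, so $\mathrm{rank}(P_{ij})$ counts the indices $k$ for which $\sigma_k(j)=i$. The quasi-flat hypothesis $\mathrm{rank}(P_{ij})\leq 1$ then amounts to the statement that for every pair $(i,j)$ at most one index $k$ satisfies $\sigma_k(j)=i$; equivalently, the permutations $\sigma_1,\ldots,\sigma_K$ are pointwise distinct, in the sense that $\sigma_k(j)\neq\sigma_{k'}(j)$ whenever $k\neq k'$.

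With this established, I would define $L\in M_N(\{*,1,\ldots,K\})$ by
\[
L_{ij}=\begin{cases}k&\text{if }\sigma_k(j)=i,\\ *&\text{otherwise,}\end{cases}
\]
which is well-defined by pointwise distinctness. The sparse Latin square property is then automatic: for fixed $i$, the symbol $k\in\{1,\ldots,K\}$ occurs in row $i$ at the unique column $j=\sigma_k^{-1}(i)$, hence each $k$ appears exactly once per row; swapping the roles of rows and columns gives the same statement for columns. Setting $P_k=U^*e_{kk}U$ yields rank-$1$ mutually orthogonal projections with $\sum_k P_k=1$, and by construction $\pi(u_{ij})=P_{L_{ij}}$ with the convention $P_*=0$.

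There is no serious obstacle in this argument: the entire structural content is in the simultaneous diagonalization of the commutative image and the recognition that quasi-flatness translates into pointwise distinctness of the $\sigma_k$, after which the sparse Latin square structure drops out for free. The converse (that any $(P_1,\ldots,P_K,L)$ as in the statement defines a quasi-flat representation) is a direct verification of the magic unitary relations, which I would record for completeness.
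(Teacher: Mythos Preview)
Your argument is correct, and it takes a genuinely different route from the paper. The paper works directly with the magic unitary structure: it fixes the $K$ rank-one projections appearing in the first row of $(P_{ij})$, then uses the observation that any rank-one projection commuting with a complete system of orthogonal rank-one projections must equal one of them, forcing every nonzero $P_{ij}$ to lie in $\{P_1,\ldots,P_K\}$; the sparse Latin square condition is then read off from the magic relations. You instead invoke the representation theory of the commutative algebra $C(S_N)$, simultaneously diagonalize, and extract permutations $\sigma_1,\ldots,\sigma_K$ via the resulting characters. Your approach is slightly more conceptual and has the pleasant side effect of producing, for free, the permutations that the paper only introduces in the next proposition (Proposition~5.3), where they are used to compute the Hopf image; the paper's approach is a bit more hands-on and avoids any appeal to Gelfand duality. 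One small remark on your converse direction: verifying the magic relations is not quite enough to land in $C(S_N)$ rather than $C(S_N^+)$, but since all the $P_{L_{ij}}$ lie in the commuting family $\{0,P_1,\ldots,P_K\}$, commutativity is automatic and the representation does factor through $C(S_N)$.
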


\begin{proof}
Assuming that $\pi:C(S_N)\to M_K(\mathbb C)$ is quasi-flat, the elements $P_{ij}=\pi(u_{ij})$ are projections of rank $\leq1$, which pairwise commute, and form a magic unitary. 

Let $P_1,\ldots,P_K\in M_K(\mathbb C)$ be the rank one projections appearing in the first row of $P=(P_{ij})$. Since these projections form a partition of unity with rank one projections, any rank one projection $Q\in M_K(\mathbb C)$ commuting with all of them satisfies $Q\in\{P_1,\ldots,P_K\}$. In particular we have $P_{ij}\in\{P_1,\ldots,P_K\}$ for any $i,j$ such that $P_{ij}\neq 0$. Thus we can write $u_{ij}\mapsto P_{L_{ij}}$, for a certain matrix $L\in M_N(*,1,\ldots,K)$, with the convention $P_*=0$.

In order to finish, the remark is that $u_{ij}\mapsto P_{L_{ij}}$ defines a representation $\pi:C(S_N)\to M_K(\mathbb C)$ precisely when the matrix $P=(P_{L_{ij}})_{ij}$ is magic. But this condition tells us precisely that $L$ must be a sparse Latin square, in the sense of Definition 5.1.
\end{proof}

Our task now is to compute the associated Hopf image. We have here:

\begin{proposition}
Given a sparse Latin square $L\in M_N(*,1,\ldots,K)$, consider the permutations $\sigma_1,\ldots,\sigma_K\in S_N$ given by:
$$\sigma_x(j)=i\iff L_{ij}=x$$
The Hopf image associated to a representation $\pi:C(S_N)\to M_K(\mathbb C)$, $u_{ij}\mapsto P_{L_{ij}}$ as above is then the algebra $C(G_L)$, where $G_L=<\sigma_1,\ldots,\sigma_K>\subset S_N$.
\end{proposition}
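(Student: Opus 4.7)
The plan is to reduce the Hopf image computation to the scalar classical case noted in the excerpt just after Definition 4.1: a representation of the form $(\mathrm{ev}_{g_1}, \ldots, \mathrm{ev}_{g_K}) : C(G) \to \mathbb{C}^K$ has Hopf image $C(\overline{\langle g_1, \ldots, g_K \rangle})$. Concretely, the claim will follow once $\pi$ is shown to be nothing more than the tuple $(\mathrm{ev}_{\sigma_1}, \ldots, \mathrm{ev}_{\sigma_K})$, viewed diagonally inside $M_K(\mathbb{C})$; the finiteness of $S_N$ will then remove the need for any closure, and $\overline{\langle \sigma_1, \ldots, \sigma_K \rangle}$ coincides with $G_L$.

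To implement this, I would first diagonalise. The $P_x$ are $K$ pairwise orthogonal rank-one projections summing to $1$ in $M_K(\mathbb{C})$, so in the orthonormal basis $(e_x)_{x=1}^{K}$ with $e_x$ spanning the range of $P_x$, each $P_x$ is the matrix unit $E_{xx}$, and each $P_{L_{ij}}$ is therefore diagonal, carrying a single $1$ in position $(L_{ij}, L_{ij})$ (or being zero when $L_{ij}=*$). The image of $\pi$ lies inside the diagonal subalgebra $\mathbb{C}^K \subset M_K(\mathbb{C})$, so we may decompose $\pi = (\pi_1, \ldots, \pi_K)$ with each $\pi_x : C(S_N) \to \mathbb{C}$ a character. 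Reading off the $x$-th diagonal entry yields
$$\pi_x(u_{ij}) = \delta_{L_{ij}, x} = \delta_{\sigma_x(j), i} = u_{ij}(\sigma_x),$$
which identifies $\pi_x$ with evaluation at $\sigma_x$. At this stage I would also record that the $\sigma_x$ really are permutations of $\{1, \ldots, N\}$, immediately from the sparse Latin square property: $x$ occurs exactly once in each row and in each column of $L$. With $\pi = \bigoplus_{x=1}^{K} \mathrm{ev}_{\sigma_x}$ established, the classical Hopf image formula applies verbatim and delivers $C(G_L)$.

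I do not anticipate a serious obstacle: once the diagonalisation step is carried out, the problem collapses to the known scalar statement, and the finiteness of $S_N$ eliminates any subtlety about taking closures. The only point deserving care is the simultaneous diagonalisation in the first step, where one should observe that commuting rank-one projections summing to $1$ in $M_K(\mathbb{C})$ are automatically pairwise orthogonal, since they form a partition of unity by rank-one summands in a $K$-dimensional space; this is precisely the observation already used in the proof of Proposition 5.2.
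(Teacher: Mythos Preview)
Your argument is correct and is essentially the same as the paper's: both proofs observe that the image of $\pi$ sits inside a commutative subalgebra isomorphic to $C(\{1,\ldots,K\})\simeq\mathbb C^K$ (you via explicit diagonalisation in the basis determined by the $P_x$, the paper via the abstract isomorphism $\alpha:P_x\mapsto\delta_x$), identify the resulting characters as evaluation at the permutations $\sigma_x$, and then invoke the classical Hopf image description $C(\langle\sigma_1,\ldots,\sigma_K\rangle)$ recalled after Definition~4.1. The only cosmetic difference is that the paper phrases the reduction through the factorisation $\varphi=\alpha\pi$ and explicitly notes that the Hopf image of $\pi$ coincides with that of $\varphi$, whereas you implicitly use that passing to the diagonal subalgebra does not change the kernel and hence not the Hopf image.
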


\begin{proof}
We use a method from \cite{bbs}. The image of $\pi$ being generated by $P_1,\ldots,P_K$, we have an isomorphism of algebras $\alpha:Im(\pi)\simeq C(1,\ldots,K)$ given by $P_i\mapsto\delta_i$. Consider the following diagram:
$$\xymatrix@R=10mm@C=10mm{
C(S_N)\ar[r]^\pi\ar[dr]_\varphi&Im(\pi)\ar[r]\ar[d]^\alpha&M_K(\mathbb C)\\
&C(1,\ldots,K)
}$$

Here the map on the right is the canonical inclusion and $\varphi=\alpha\pi$. Since the Hopf image of $\pi$ coincides with the one of $\varphi$, it is enough to compute the latter. We know that $\varphi$ is given by $\varphi(u_{ij})=\delta_{L_{ij}}$, with the convention $\delta_*=0$. By Gelfand duality, $\varphi$ must come from a certain map $\sigma:\{1,\ldots,K\}\to S_N$, via the transposition formula $\varphi(f)(x)=f(\sigma_x)$. With the choice $f=u_{ij}$, we obtain:
$$\delta_{L_{ij}}(x)=u_{ij}(\sigma_x)$$

Now observe that these two quantities are by definition given by:
$$\delta_{L_{ij}}(x)=\begin{cases}
1&{\rm if}\ L_{ij}=x\\
0&{\rm otherwise}
\end{cases}\qquad,\qquad 
u_{ij}(\sigma_x)=\begin{cases}
1&{\rm if}\ \sigma_x(j)=i\\
0&{\rm otherwise}
\end{cases}
$$

We conclude that $\sigma_x$ is the permutation in the statement. Summarizing, we have shown that $\varphi$ comes by transposing the map $x\to\sigma_x$, with $\sigma_x$ being as in the statement. By using now the general theory in \cite{bbi}, the Hopf image of $\varphi$ is the algebra $C(G_L)$, with $G_L=<\sigma_1,\ldots,\sigma_K>$, and this finishes the proof.
\end{proof}

Let us discuss the construction of the universal model space, and then Conjecture 4.6, in the classical case. We agree to identify the rank one projections in $M_K(\mathbb C)$ with the corresponding elements of the projective space $P^{K-1}_\mathbb C$.

With these conventions, we first have the following result:

\begin{proposition}
Assuming that $G\subset S_N$ is quasi-transitive, with orbits of size $K$, the universal quasi-flat model space for $G$ is given by $X_G=E_K\times L_{N,K}^G$, where:
$$E_K=\left\{(P_1,\ldots,P_K)\in (P^{K-1}_\mathbb C)^K\Big|P_i\perp P_j,\forall i\neq j\right\}$$
$$L_{N,K}^G=\left\{L\in M_N(*,1,\ldots,K)
\text{ sparse Latin square }
\Big|G_L\subset G\right\}$$ 
In particular, $X_G$ has a canonical probability measure, obtained as the homogeneous space measure on $E_K$ times the the normalized counting measure on $L_{N,K}^G$.
\end{proposition}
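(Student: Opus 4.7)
The plan is to identify the points of $X_G$ (that is, the quasi-flat representations $\pi : C(G) \to M_K(\mathbb C)$) with pairs $((P_1,\ldots,P_K), L) \in E_K \times L_{N,K}^G$, via the assignment $u_{ij}\mapsto P_{L_{ij}}$, and then to promote this set-theoretic bijection to an isomorphism of measured spaces compatible with the universal property of Theorem 3.5. Since $G\subset S_N$ is classical, $C(G)$ is a quotient of $C(S_N)$, so a quasi-flat representation of $C(G)$ is the same thing as a quasi-flat representation of $C(S_N)$ that factors through the quotient $C(S_N)\to C(G)$.

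First I would invoke Proposition 5.2, applied to $S_N$, to see that quasi-flat representations $\widetilde\pi : C(S_N) \to M_K(\mathbb C)$ are in bijection with pairs consisting of an orthogonal family $(P_1,\ldots,P_K)$ of rank-one projections summing to $1$ (i.e.\ an element of $E_K$) and a sparse Latin square $L$, via $u_{ij}\mapsto P_{L_{ij}}$. Next, by Proposition 5.3 the Hopf image of such $\widetilde\pi$ is precisely $C(G_L)$, where $G_L = \langle \sigma_1,\ldots,\sigma_K\rangle\subset S_N$. The universal property of the Hopf image then tells us that $\widetilde\pi$ factors through $C(G)$ if and only if $C(G_L)$ is a Hopf $C^*$-quotient of $C(G)$, i.e.\ if and only if $G_L\subset G$, and this is exactly the defining condition of $L_{N,K}^G$. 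Combining these two facts yields the desired bijection $X_G \leftrightarrow E_K \times L_{N,K}^G$.

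For the geometric and measure-theoretic structure, observe that sending each tuple $(P_1,\ldots,P_K)$ to its $K$-tuple of lines in $\mathbb C^K$ realizes $E_K$ as the complete flag manifold $U_K/T^K$, which is a homogeneous $U_K$-space carrying a unique $U_K$-invariant probability measure; and $L_{N,K}^G$ is finite, so receives the normalized counting measure. To establish the universal property, given any quasi-flat model $\rho : C(G)\to M_K(C(Y))$, I would construct a continuous classifying map $Y\to E_K\times L_{N,K}^G$: the sparse Latin square $L(y)$ is locally constant on $Y$, because $L_{N,K}^G$ is discrete while the projections $\rho(u_{ij})(y)$ vary continuously, and on each connected component of $Y$ the tuple $(P_1(y),\ldots,P_K(y))$ is recovered by reading off any non-zero $\rho(u_{ij})(y)$. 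I expect the main obstacle to be reconciling this direct parametrization with the Tannakian cut-out description of Theorem 3.5(2), namely showing that the Tannakian relations defining $G\subset S_K^+\hat *\cdots\hat * S_K^+$ impose nothing more on $X_{N,K}$ than the combinatorial condition $G_L\subset G$; this ultimately follows because, in the classical setting, the only extra intertwiners come from the classicality of $S_N$ and the inclusion $G\subset S_N$, and these were already used implicitly in Propositions 5.2 and 5.3.
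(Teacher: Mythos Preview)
Your proposal is correct and follows exactly the paper's approach; indeed, the paper's own proof is the one-line assertion that the result follows by combining Propositions 5.2 and 5.3, which is precisely the content of your first two paragraphs. Your additional remarks on the classifying map and on reconciling with the Tannakian description of Theorem 3.5 go beyond what the paper supplies, but are not required for the statement as given.
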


\begin{proof}
The first assertion follows by combining Proposition 5.2 and Proposition 5.3 above, and the second assertion is clear from the definitions.
\end{proof}

Note that the model above is empty if there is no sparse Latin square $L$ such that $G_L\subset G$. The existence of such a sparse Latin square is a strong condition on $G$, and here is an intrinsic characterization of the groups satisfying this condition:

\begin{proposition}
Let $G\subset S_N$ be a classical group. The following are equivalent:
\begin{enumerate}
\item $L_{N,K}^G\neq \emptyset$.

\item There exist $K$ elements $\sigma_1, \dots, \sigma_K\in G$ such that $\sigma_1(i), \dots, \sigma_K(i)$ are pairwise distinct for all $1\leq i\leq N$.
\end{enumerate}
\end{proposition}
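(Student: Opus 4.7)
The plan is to observe that the two conditions are essentially two different ways of encoding the same combinatorial data, and to set up an explicit bijection between sparse Latin squares $L$ with $G_L \subset G$ and $K$-tuples $(\sigma_1, \ldots, \sigma_K)$ in $G$ satisfying the distinctness property.

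For the direction (1) $\Rightarrow$ (2), assume $L \in L_{N,K}^G$ and let $\sigma_1, \ldots, \sigma_K$ be the permutations associated to $L$ by Proposition 5.3, i.e.\ $\sigma_x(j) = i \iff L_{ij} = x$. These are elements of $G$ since $G_L \subset G$. To check the distinctness condition, fix $j \in \{1, \ldots, N\}$: the definition of sparse Latin square says column $j$ of $L$ contains each symbol $1, \ldots, K$ exactly once, hence the map $x \mapsto \sigma_x(j)$ is injective, i.e.\ $\sigma_1(j), \ldots, \sigma_K(j)$ are pairwise distinct.

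For the converse (2) $\Rightarrow$ (1), given $\sigma_1, \ldots, \sigma_K \in G$ with $\sigma_1(i), \ldots, \sigma_K(i)$ distinct for every $i$, I define a matrix $L \in M_N(\ast, 1, \ldots, K)$ by
\[
L_{ij} = \begin{cases} x & \text{if } \sigma_x(j) = i, \\ \ast & \text{if no such } x \text{ exists.} \end{cases}
\]
The distinctness hypothesis guarantees $L$ is well-defined (at most one $x$ works for each $(i,j)$). The column condition for $L$ being a sparse Latin square follows directly from hypothesis (2): in column $j$, the symbol $x$ occurs precisely at row $\sigma_x(j)$, and these rows are distinct. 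For the row condition, fix $i$; then $x$ occurs in row $i$ at column $\sigma_x^{-1}(i)$, and I claim these columns are distinct across different $x$'s. Indeed, if $\sigma_x^{-1}(i) = \sigma_y^{-1}(i) = j$ then $\sigma_x(j) = \sigma_y(j) = i$, forcing $x = y$ by hypothesis (2) again. Thus $L$ is a sparse Latin square, and its associated permutations recover exactly $\sigma_1, \ldots, \sigma_K$, so $G_L = \langle \sigma_1, \ldots, \sigma_K \rangle \subset G$.

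There is no real obstacle here, only bookkeeping: the verification hinges on the fact that the distinctness condition in (2) is exactly what is needed for both the column property (obvious) and the row property (via the little argument comparing $\sigma_x^{-1}(i)$ and $\sigma_y^{-1}(i)$). The content of the proposition is really a reformulation, translating the combinatorial constraint of being a sparse Latin square into an internal condition on $G$ itself, free of any reference to the ambient matrix $L$.
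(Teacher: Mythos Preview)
Your proof is correct and follows essentially the same approach as the paper. The only cosmetic difference is in the $(2)\Rightarrow(1)$ direction: the paper sets $L_{ij}=x$ when $\sigma_x(i)=j$, so the permutations attached to $L$ turn out to be $\sigma_1^{-1},\ldots,\sigma_K^{-1}$, whereas you set $L_{ij}=x$ when $\sigma_x(j)=i$, recovering $\sigma_1,\ldots,\sigma_K$ directly; both choices work, and your version avoids the small detour through inverses.
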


\begin{proof}
$(1)\implies (2)$ If $\sigma_1, \dots, \sigma_K$ are the permutations associated to $L$, then $\sigma_x(i) = \sigma_y(i)$ means by definition that $x = L_{i\sigma_x(i)} = L_{i\sigma_y(i)} = y$.

$(2)\implies (1)$ Consider such permutations $\sigma_1,\ldots,\sigma_K$. If $i$ is fixed then for each $j$ there is at most one index $x$ such that $\sigma_x(i) = j$. We set $L_{ij} = x$ in that case and $L_{ij} = *$ otherwise. Then, $L$ is a sparse Latin square and the associated permutations are $\sigma_1^{-1},\ldots,\sigma_K^{-1}$, which belong to $G$.
\end{proof}

It turns out that as soon as $L_{N,K}^G\neq \emptyset$, the universal quasi-flat model is stationary. To prove this, let us first give another basic observation:

\begin{proposition}
Given $G\subset S_N$, we have an action $G\curvearrowright L_{N,K}^G$, given by
$$(L^\tau)_{ij}=L_{\tau^{-1}(i)j}$$
\end{proposition}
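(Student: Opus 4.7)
The claim has three components that need verification: that $L^\tau$ is still a sparse Latin square, that $G_{L^\tau}\subset G$ (so $L^\tau\in L_{N,K}^G$), and that the assignment $L\mapsto L^\tau$ defines an honest group action.

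First I would handle the sparse Latin square condition. The operation $L\mapsto L^\tau$ is just a permutation of the rows of $L$: the $i$-th row of $L^\tau$ is the $\tau^{-1}(i)$-th row of $L$. Permuting rows preserves the row-wise condition trivially, and it preserves the column-wise condition as well, since each column of $L^\tau$ contains the same multiset of entries as the corresponding column of $L$, simply reordered. Hence $L^\tau$ is still a sparse Latin square.

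Next I would compute the permutations $\sigma_x^\tau\in S_N$ associated to $L^\tau$ via the recipe of Proposition 5.3. The defining equivalence $\sigma_x^\tau(j)=i\iff (L^\tau)_{ij}=x$ unfolds as
\[
\sigma_x^\tau(j)=i\iff L_{\tau^{-1}(i)j}=x\iff \sigma_x(j)=\tau^{-1}(i)\iff i=\tau\sigma_x(j),
\]
so $\sigma_x^\tau=\tau\sigma_x$. Consequently $G_{L^\tau}=\langle\tau\sigma_1,\ldots,\tau\sigma_K\rangle\subset\langle\tau,\sigma_1,\ldots,\sigma_K\rangle\subset G$, using that $\tau\in G$ and $\sigma_1,\ldots,\sigma_K\in G_L\subset G$. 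This shows $L^\tau\in L_{N,K}^G$.

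Finally I would check that the formula gives a left action. A direct substitution yields
\[
\bigl((L^{\tau_1})^{\tau_2}\bigr)_{ij}=(L^{\tau_1})_{\tau_2^{-1}(i)j}=L_{\tau_1^{-1}\tau_2^{-1}(i)j}=L_{(\tau_2\tau_1)^{-1}(i)j}=(L^{\tau_2\tau_1})_{ij},
\]
and $L^e=L$ is immediate, completing the verification. No step presents any real obstacle; the only mildly nontrivial point is the computation $\sigma_x^\tau=\tau\sigma_x$, which is what makes stability of $L_{N,K}^G$ under the action work cleanly.
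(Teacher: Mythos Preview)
Your proof is correct and follows essentially the same route as the paper's: both reduce the stability check to the computation $\sigma_x^\tau=\tau\sigma_x$, and from this deduce that $G_{L^\tau}\subset G$. Your write-up is slightly more explicit about why row permutation preserves the sparse Latin square property and about the left-action identity, but the substance is identical.
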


\begin{proof}
Given a sparse Latin square $L\in M_N(*,1,\ldots,K)$, the matrix $L^\tau\in M_N(*,1,\ldots,K)$ constructed in the statement is a sparse Latin square too and $\tau\to (L\mapsto L^\tau)$ is a group morphism, so it remains to check that $G_L\subset G$ implies $G_{L^\tau}\subset G$.

To do this, let us write $G_L=<\sigma_1,\ldots,\sigma_K>$ and $G_{L^\tau}=<\sigma_1',\ldots,\sigma_K'>$. Then:
\begin{eqnarray*}
\sigma'_x(j)=i
&\iff&(L^\tau)_{ij}=x\iff L_{\tau^{-1}(i)j}=x\\
&\iff&\sigma_x(j)=\tau^{-1}(i)\iff\tau\sigma_x(j)=i
\end{eqnarray*}

Thus, $\sigma_x'=\tau\sigma_x$ and:
\begin{eqnarray*}
G_L\subset G
&\iff&\sigma_1,\ldots,\sigma_K\in G\\
&\iff&\tau\sigma_1,\ldots,\tau\sigma_K\in G\\
&\iff&\sigma_1',\ldots,\sigma_K'\in G\\
&\iff&G_{L^\tau}\subset G
\end{eqnarray*}

Thus $G_L\subset G$ implies $G_{L^\tau}\subset G$, and this finishes the proof.
\end{proof}

We can now verify Conjecture 4.6 in the classical group case:

\begin{theorem}
If the space $L_{N,K}^G$ is not empty, then the universal flat model for a quasi-transitive subgroup $G\subset S_N$,
$$\pi:C(G)\to M_K(C(X_G))$$
is stationary with respect to $\nu\otimes m$ where $m$ is the normalized counting measure on $L_{N,K}^G$ and $\nu$ is any probability measure on $E_K$.
\end{theorem}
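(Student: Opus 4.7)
The plan is to verify the stationarity identity
\[
\int_G f=(tr\otimes\nu\otimes m)\pi(f)
\]
on all monomials $f=u_{i_1j_1}\cdots u_{i_nj_n}$ in the standard coordinates, which suffices by $*$-density. The left-hand side is the Haar integral of a function on the finite group $G$, namely $|T|/|G|$, where
\[
T=\{\sigma\in G\mid\sigma(j_r)=i_r,\ 1\le r\le n\}.
\]

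For the right-hand side, the first step is a fiberwise trace computation. Since a tuple $(P_1,\ldots,P_K)\in E_K$ consists of pairwise orthogonal rank-one projections summing to $1$, we have $P_xP_y=\delta_{xy}P_x$ (with the convention $P_*=0$). It follows that the product $P_{L_{i_1j_1}}\cdots P_{L_{i_nj_n}}$ collapses to a single rank-one projection $P_x$ when the labels $L_{i_rj_r}$ share a common value $x\in\{1,\ldots,K\}$, and vanishes otherwise. Since any rank-one projection has normalized trace $1/K$, this fiberwise trace depends only on $L$, not on the tuple in $E_K$, so $\nu$ integrates out trivially and one obtains
\[
(tr\otimes\nu\otimes m)\pi(f)=\frac{|S|}{K|L_{N,K}^G|},\quad S=\{(L,x)\in L_{N,K}^G\times\{1,\ldots,K\}\mid L_{i_rj_r}=x,\ \forall r\}.
\]

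The combinatorial core is then to prove $|S|/(K|L_{N,K}^G|)=|T|/|G|$, and here my plan is to exploit the $G$-action on $L_{N,K}^G$ from Proposition 5.6. I would introduce the map
\[
\Phi:L_{N,K}^G\times\{1,\ldots,K\}\longrightarrow G,\qquad(L,x)\longmapsto\sigma_x^L.
\]
The identity $\sigma_x^{L^\tau}=\tau\sigma_x^L$ established in the proof of Proposition 5.6 says exactly that $\Phi$ is $G$-equivariant with respect to left multiplication on the target. Combined with the hypothesis $L_{N,K}^G\neq\emptyset$, this forces $\Phi$ to be surjective (any $\sigma\in G$ is attained by transporting a fixed $L_0\in L_{N,K}^G$ through an appropriate $\tau$) and forces all of its fibers to have the same cardinality, which by comparing with the size of the domain must be $K|L_{N,K}^G|/|G|$. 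Since $(L,x)\in S$ if and only if $\Phi(L,x)\in T$, this gives $|S|=|T|\cdot K|L_{N,K}^G|/|G|$, which rearranges into the desired identity.

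The main point to verify carefully is the match, under $\Phi$, between the $G$-action on $L_{N,K}^G$ from Proposition 5.6 and left multiplication on $G$; once this equivariance is in hand, the stationarity identity is a direct consequence of the surjectivity and equidistribution of the fibers of $\Phi$, and the two probability measures $\nu$ and $m$ enter only in the most formal way.
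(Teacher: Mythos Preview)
Your argument is correct, and it reaches the same conclusion via a somewhat different route than the paper.

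Both proofs rest on the same key input, namely the identity $\sigma_x^{L^\tau}=\tau\sigma_x^L$ established in Proposition~5.6. The paper uses this to show that the state $(tr\otimes m)\pi^P$ on $C(G)$ is invariant under the left regular action of $G$: one checks $\pi^P(u_{ij})(\tau^{-1}L)=\pi^P(\tau\cdot u_{ij})(L)$, and then invokes the uniqueness of the Haar state to conclude immediately, without ever writing down either side of the stationarity identity explicitly. You instead compute both sides on monomials and match them via a counting argument, packaging the equivariance into the map $\Phi(L,x)=\sigma_x^L$ and using that an equivariant map onto a transitive $G$-space has constant fiber size. Your approach is more hands-on and makes the combinatorics visible; the paper's approach is shorter and more conceptual, trading the explicit count for the abstract characterization of Haar measure. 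Either way, the role of $\nu$ is purely formal, as you note, since the fiberwise trace already depends only on $L$.
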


\begin{proof}
Write $M_K(C(X_G)) = M_{K}(\mathbb C)\otimes C(E_K)\otimes C(L_{N,K}^G)$ and consider, for $P\in E_K$, the following map:
$$\pi^P = (id\otimes ev_P\otimes id) : C(G)\to M_K(C(L_{N,K}^G))$$

Recall that for $\tau\in G$ and $f\in C(G)$, $\tau.f$ denotes the map $h\mapsto f(\tau^{-1}h)$. This corresponds to the regular action of $G$ on itself. Moreover:
$$\pi^P(u_{ij})(\tau^{-1}L) = \pi^P(u_{\tau(i)j}) = \pi^P(\tau.u_{ij})$$

Since the normalized counting measure $m$ on $L_{N,K}^G$ is $G$-invariant, it follows (writing again $m$ for the integration with respect to $m$ on $C(L_{N, K}^G)$) that $(tr\otimes m)\pi^P$ is a $G$-invariant state on $C(G)$, i.e. it is $\int_G$. Summarizing, we have proved that $(tr\otimes id\otimes m)\pi$ is the constant function equal to $\int_G$ and the result follows.
\end{proof}

\section{Group duals}

In what follows we discuss universal quasi-flat models in the group dual case. Let us start with $\Gamma=\mathbb Z_N$, where the class of $1$ will be denoted by $g$, and called the canonical generator. We have:

\begin{proposition}
We have an isomorphism of Hopf algebras
$$C^*(\mathbb Z_N)=C(S_N^+)\Big/\left<u_{ij}=u_{kl}\Big|j-i=l-k\ ({\rm mod}\ N)\right>$$
which is such that $g=\sum_{j=1}^Nw^{j-i}u_{ij}$, for any $i$, where $w=e^{2\pi i/N}$.
\end{proposition}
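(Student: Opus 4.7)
The plan is to construct mutually inverse Hopf $*$-algebra morphisms between the two algebras. Let $B$ denote the quotient algebra on the right. Since the defining relations force $u_{ij}$ to depend only on $r := j-i$ modulo $N$, I would first set $v_r := u_{ij}$ for any $i,j$ with $j-i \equiv r \pmod N$; this is well-defined in $B$. Because the magic unitary condition gives $\sum_j u_{ij} = 1$ along each row, we get $\sum_{r=0}^{N-1} v_r = 1$, and the $v_r$ are pairwise orthogonal projections. Thus the element $g := \sum_{r=0}^{N-1} w^r v_r \in B$ is a unitary satisfying $g^N = \sum_r w^{Nr} v_r = 1$, since $w^N = 1$.

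Next, I would check that $g$ is group-like for the Hopf structure induced on $B$. Using $\Delta(u_{0r}) = \sum_k u_{0k} \otimes u_{kr}$, which in the quotient reads $\Delta(v_r) = \sum_k v_k \otimes v_{r-k}$, a short reindexing gives
\[
\Delta(g) = \sum_r w^r \sum_k v_k \otimes v_{r-k} = \Bigl(\sum_k w^k v_k\Bigr) \otimes \Bigl(\sum_s w^s v_s\Bigr) = g \otimes g.
\]
By the universal property of $C^*(\mathbb{Z}_N)$ (as a universal $C^*$-algebra generated by a unitary of order $N$), we thus obtain a Hopf $*$-algebra morphism $\Psi : C^*(\mathbb{Z}_N) \to B$ sending the canonical generator to $g$.

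For the inverse, I would use the standard Fourier projections $p_r := \frac{1}{N}\sum_{k=0}^{N-1} w^{-rk} g^k \in C^*(\mathbb{Z}_N)$, which are the minimal projections of $C^*(\mathbb{Z}_N) \simeq C(\mathbb{Z}_N)$ and hence form a partition of unity. The key point is that the circulant matrix $P_{ij} := p_{j-i}$ is magic: each row and column is a permutation of $\{p_0,\ldots,p_{N-1}\}$, so sums to $1$. By the universal property of $C(S_N^+)$, this produces a morphism $C(S_N^+) \to C^*(\mathbb{Z}_N)$, which trivially descends to $\Phi : B \to C^*(\mathbb{Z}_N)$ since $P_{ij}$ depends only on $j-i$.

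Finally I would verify $\Phi \Psi = \mathrm{id}$ and $\Psi \Phi = \mathrm{id}$ by direct computation. For the first, since the $v_r$ are orthogonal projections, $g^k = \sum_r w^{rk} v_r$, and then
\[
\Psi(p_r) = \frac{1}{N}\sum_k w^{-rk} \sum_s w^{sk} v_s = \sum_s v_s \cdot \frac{1}{N}\sum_k w^{(s-r)k} = v_r,
\]
so $\Psi\Phi(u_{ij}) = \Psi(p_{j-i}) = v_{j-i} = u_{ij}$. The other direction, $\Phi\Psi(g) = \sum_r w^r p_r = g$, follows by a similar Fourier computation. The main conceptual point, rather than an obstacle, is simply recognizing that the circulant relations are exactly what reduce the magic unitary to a Fourier family of projections on a single cyclic group; the verifications are then routine.
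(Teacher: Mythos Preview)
Your proof is correct and follows essentially the same route as the paper: both identify the image of $u_{ij}$ with the spectral projection of the canonical generator onto the eigenvalue $w^{j-i}$ (the paper writes this as $\delta_{w^{j-i}}\in C(\widehat{\mathbb Z}_N)$, you write it as the Fourier projection $p_{j-i}\in C^*(\mathbb Z_N)$). The only difference is cosmetic: the paper first observes that the quotient is commutative and then invokes Gelfand duality in one line, whereas you construct the inverse map explicitly and verify $\Psi\Phi=\Phi\Psi=\mathrm{id}$ by hand.
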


\begin{proof}
The quotient algebra $A$ in the statement being generated by the entries of the first row of $u=(u_{ij})$, it is commutative. If we identify the elements of $\widehat{\mathbb Z}_N$ with the powers of $w$, and the elements of $\mathbb Z_N$ with the functions $w\mapsto w^k$, then $\pi(u_{ij})=\delta_{w^{j-i}}$ defines an isomorphism between $A$ and the algebra $C^*(\mathbb Z_N) = C(\widehat{\mathbb Z}_N)$. Moreover:
$$(\pi\otimes \pi)\Delta(u_{ij})=\sum_k\delta_{w^{k-i}}\otimes \delta_{w^{j-k}} = \Delta_{\widehat{\mathbb Z}_K}(\delta_{w^{j-i}})$$

Thus we have an isomorphism of Hopf algebras, as stated. The formula for $g$ in the statement then follows from the formula for $\pi$.
\end{proof}

This translates into a convenient description of the universal flat matrix model.

\begin{proposition}
The universal flat model space for $C^*(\mathbb Z_N)$ is the space
$$E_N=\left\{(P_1,\ldots,P_N)\in (P^{N-1}_\mathbb C)^N\Big|P_i\perp P_j,\forall i\neq j\right\}$$
with the model map given by $\pi(g)(P_1,\ldots,P_N)=\sum_{j=1}^Nw^j P_j$.
\end{proposition}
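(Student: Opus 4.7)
The plan is to combine the presentation from Proposition 6.1, which realizes $C^*(\mathbb Z_N)$ as the quotient of $C(S_N^+)$ by the circulant relations $u_{ij}=u_{kl}$ whenever $j-i\equiv l-k\,({\rm mod}\,N)$, with the universal flat model $\pi_N:C(S_N^+)\to M_N(C(X_N))$ from Proposition 3.4. Since a flat model for a quotient of $C(S_N^+)$ must factor through the same quotient of $X_N$, the universal flat model space $X_{\widehat{\mathbb Z_N}}$ is cut out inside $X_N$ precisely by the circulant conditions on the projection-valued matrix $P=(P_{ij})$.

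First I would unpack what those conditions mean for a $P\in X_N$: they say exactly that $P_{ij}$ depends only on the residue $j-i\in\mathbb Z_N$. Setting $P_d:=P_{i,i+d}$ (independent of $i$), the bistochasticity of $P$ reduces to the single identity $\sum_{d=1}^NP_d=1_{M_N(\mathbb C)}$. Because we are in the flat case each $P_d$ is a rank-one projection in $M_N(\mathbb C)$, and having $N$ rank-one projections sum to the identity of $M_N(\mathbb C)$ forces them to be pairwise orthogonal. This shows $X_{\widehat{\mathbb Z_N}}\subset E_N$.

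Conversely, starting from $(P_1,\ldots,P_N)\in E_N$, I would build a circulant magic unitary by defining $P_{ij}:=P_{j-i\,({\rm mod}\,N)}$: mutual orthogonality together with $\sum_dP_d=1$ ensures each row and each column consists of $N$ orthogonal rank-one projections summing to $1$, so this matrix lies in $X_N$ and satisfies the circulant relations by construction. This provides the reverse inclusion $E_N\subset X_{\widehat{\mathbb Z_N}}$ and identifies the two spaces. The model map is then obtained by pushing the formula $g=\sum_jw^{j-i}u_{ij}$ of Proposition 6.1 through $\pi$, giving $\pi(g)=\sum_jw^{j-i}P_{j-i}=\sum_dw^dP_d$, which matches the stated expression after reindexing the tuple by $\{1,\ldots,N\}$.

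No serious obstacle is anticipated; the only care required is the bookkeeping with indices (passing between $\mathbb Z_N$-labels and $\{1,\ldots,N\}$-labels) and the small linear-algebra fact that $N$ rank-one projections in $M_N(\mathbb C)$ summing to $1$ must be pairwise orthogonal, which is immediate by a trace/rank count.
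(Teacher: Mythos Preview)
Your proposal is correct and follows essentially the same approach as the paper: both arguments identify the universal flat model space with the set of circulant elements of $X_N$, which are parametrized by their first row $(P_1,\ldots,P_N)\in E_N$, and then read off the formula for $\pi(g)$ from Proposition 6.1. The only cosmetic difference is that you invoke the general quotient/Tannakian machinery of Theorem 3.5 to cut out $X_{\widehat{\mathbb Z_N}}\subset X_N$, whereas the paper checks universality directly by sending an arbitrary flat model into $E_N$; the content is the same.
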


\begin{proof}
According to Proposition 6.1 above, $\pi(u_{ij})(P_1,\ldots,P_N)=P_{j-i}$ is a flat matrix model map, which gives the formula in the statement for $\pi(g)$.

Let $X$ be a flat matrix model space for $C^*(\mathbb Z_N)$, with model map $\pi_X$. For each $x\in X$ the matrix $\pi_X(g)(x)$ is circulant, so this matrix is completely determined by its first row, which is an element of $E_N$. Let us set $P_j^x=P_{Nj}^x$, and let $P^x$ be the circulant matrix with first row $(P_1^x,\ldots,P_N^x)$. The map $\Phi:X\to E_N$ given by $\Phi(x)=P^x$ being a continuous embedding, we get a surjective $*$-homomorphism $\Psi:M_N(C(E_N))\to M_N(C(X))$ through which $\pi_X$ factors by construction. Thus, we obtain the universality of $E_N$, as claimed.
\end{proof}

Regarding now the general group dual case, we have here the following result:

\begin{theorem}
Given a quotient group $\mathbb Z_K^{* M}\to\Gamma$, the universal quasi-flat model $\pi:C^*(\Gamma)\to M_K(C(Y_\Gamma))$ appears as follows:
\begin{enumerate}

\item For $\Gamma=\mathbb Z_K^{* M}$, the model space is $Y_{\Gamma} = Y_{N,K}=\underbrace{E_K\times\ldots\times E_K}_{M\ terms}$, and the model map is given by the formula in Proposition 6.2, on each of the components.

\item In general, the model space is the subspace $Y_{\Gamma}\subset Y_{N,K}$ consisting of the elements $x$ such that $\pi(\gamma)(x)=Id$ for any $\gamma\in\ker(\mathbb Z_K^{*M}\to\Gamma)$.
\end{enumerate}
\end{theorem}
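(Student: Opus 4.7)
The plan is to establish (1) by reducing to the flat-model analysis of a single $\mathbb Z_K$ factor from Proposition 6.2, and then to obtain (2) by pulling back along the quotient map $q:\mathbb Z_K^{*M}\to\Gamma$. Both steps proceed through universal properties, with essentially no explicit computation.

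For part (1), I would start with an arbitrary quasi-flat model $\pi':C^*(\mathbb Z_K^{*M})\to M_K(C(X))$ and exploit the block-diagonal structure $u=\mathrm{diag}(u^1,\ldots,u^M)$ of the fundamental corepresentation coming from the inclusion $\widehat{\mathbb Z_K^{*M}}\subset\underbrace{S_K^+\,\hat{*}\,\ldots\,\hat{*}\,S_K^+}_{M\ terms}$ of Proposition 1.6. Since the orbits have size exactly $K$, Proposition 3.2(3) forces the rank pattern of $\pi'$ to match $\varepsilon$, so every entry of each block $\pi'(u^r)$ is a genuine rank-one projection at every fiber and the restriction of $\pi'$ to the $r$-th block is a flat model of $C^*(\mathbb Z_K)$. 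Proposition 6.2 then yields a continuous factorization $\Phi_r:X\to E_K$ via the circulant parametrization $(P_1,\ldots,P_K)\mapsto\sum_{j=1}^K w^j P_j$. Assembling the $M$ maps into $\Phi=(\Phi_1,\ldots,\Phi_M):X\to E_K^M=Y_{N,K}$ gives the desired factorization, which establishes the universality asserted in (1).

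For part (2), consider the surjection $C^*(\mathbb Z_K^{*M})\twoheadrightarrow C^*(\Gamma)$ induced by $q$. By the universal property of full group $C^*$-algebras, a representation $\tilde\pi$ of $C^*(\mathbb Z_K^{*M})$ descends to $C^*(\Gamma)$ if and only if $\tilde\pi(\gamma)=\mathrm{Id}$ for every $\gamma\in\ker q$. Combining this with (1), quasi-flat models of $C^*(\Gamma)$ are in bijection with continuous maps $\Phi:X\to Y_{N,K}$ whose image lies in the locus where $\pi_{N,K}(\gamma)(x)=\mathrm{Id}$ for all $\gamma\in\ker q$, and this locus is $Y_\Gamma$ by definition. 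The restriction of $\pi_{N,K}$ to $Y_\Gamma$ is then automatically the universal quasi-flat model for $C^*(\Gamma)$.

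The one step that requires actual care is the rigidity in part (1): namely the claim that the entries of $\pi'(u^r)$ are rank-one at every fiber, not merely rank $\leq 1$. This is exactly the content of Proposition 3.2(3) under the quasi-transitivity hypothesis, and without it a block could collapse and fail to define a point of $E_K$. Once this rigidity is secured, the remainder of the argument is a formal unwinding of the universal properties of free products and of full group $C^*$-algebras, so I expect no further obstacles.
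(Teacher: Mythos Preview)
Your proposal is correct. For part (1) you spell out more than the paper does (the paper simply says ``clear from Proposition 6.2''), explicitly invoking Proposition 3.2(3) for the rank-one rigidity and then assembling the $M$ factor-wise maps $\Phi_r:X\to E_K$; this is the same argument, made explicit.

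For part (2) your route differs from the paper's. You argue directly from the universal property of full group $C^*$-algebras: a representation of $C^*(\mathbb Z_K^{*M})$ descends to $C^*(\Gamma)$ precisely when it sends every $\gamma\in\ker q$ to the identity, and combining this with the universality from (1) cuts out $Y_\Gamma\subset Y_{N,K}$. The paper instead invokes the general Tannakian construction of Theorem 3.5(2), which describes $X_G\subset X_{N,K}$ for an arbitrary quasi-transitive $G$ via the intertwiner relations defining $G$, and then observes that for $G=\widehat\Gamma$ these Tannakian relations amount exactly to the conditions $\gamma=1$ for $\gamma\in\ker q$. Your argument is more elementary and self-contained for the group-dual case, avoiding any appeal to Tannakian duality; the paper's argument has the advantage of exhibiting the result as a specialization of the general framework already set up in Theorem 3.5. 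Both are short and valid.
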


\begin{proof}
The first assertion is clear from Proposition 6.2. Regarding now the second assertion, let us go back to Theorem 3.5 (2) above. The statement there tells us that the model space for $\widehat{\Gamma}$ appears from the model space for $\underbrace{S_K^+\,\hat{*}\,\ldots\,\hat{*}\,S_K^+}_{M\ terms}$ via the Tannakian conditions defining $\widehat{\Gamma}$. But these Tannakian conditions, when expressed in terms of the generators of $\mathbb Z_K^{* M}$ are precisely the relations $\gamma=1$ with $\gamma\in\ker(\mathbb Z_K^{*M}\to\Gamma)$, as stated.
\end{proof}

\section{Maximal tori}

We will now prove Conjecture 4.7 for several families of duals of discrete groups. The proofs will be more convenient to write by using a lift of the quasi-flat models in the following sense:

\begin{proposition}
The affine lift of the universal quasi-flat model for $C^*(\mathbb Z_K^{\ast M})$ is $\pi:C^*(\mathbb Z_K^{\ast M})\to M_K(C(U_K^M))$ given on the canonical generator $g_{i}$ of the $i$-th factor by
$$\pi(g_{i})(U^{1}, \dots, U^{M})=\sum_jw^jP_{U_j^i}$$
where $U_j^i$ is the $j$-th column of $U^i$ and $P_{\xi}$ denotes the orthogonal projection onto $\mathbb C\xi$.
\end{proposition}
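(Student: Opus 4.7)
The statement is essentially a packaging result: it expresses the universal quasi-flat model for $C^*(\mathbb Z_K^{\ast M})$, constructed on $Y_{N,K}=E_K^M$ in Theorem 6.3(1), after pulling back along the natural surjection $U_K\to E_K$. So the plan is to identify $E_K$ as a quotient of $U_K$, check that this quotient is compatible with the model from Proposition 6.2, and then assemble the $M$ factors.

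First I would fix, for any unitary $U\in U_K$, the map
\[
q:U_K\to E_K,\qquad U\mapsto (P_{U_1},\ldots,P_{U_K}),
\]
where $U_j$ denotes the $j$-th column of $U$. The columns of a unitary form an orthonormal basis of $\mathbb C^K$, so the $P_{U_j}$ are mutually orthogonal rank $1$ projections summing to $\mathrm{Id}$, confirming that $q$ lands in $E_K$. Moreover $q$ is surjective: any tuple $(P_1,\ldots,P_K)\in E_K$ is realised by picking unit vectors in the ranges of the $P_j$'s and assembling them into the columns of a unitary. Dually this gives an injective $\ast$-homomorphism $q^*:C(E_K)\hookrightarrow C(U_K)$.

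Next I would use Proposition 6.2, which gives the universal flat model for $C^*(\mathbb Z_K)$ on $E_K$ via $\pi(g)(P_1,\ldots,P_K)=\sum_j w^j P_j$. Composing with $q$ yields a representation $\tilde\pi:C^*(\mathbb Z_K)\to M_K(C(U_K))$ with
\[
\tilde\pi(g)(U) \;=\; \pi(g)(P_{U_1},\ldots,P_{U_K}) \;=\; \sum_j w^j P_{U_j},
\]
which is the single-factor case of the formula in the statement. This representation is the ``affine lift'' in the sense that it factors the universal model of $C^*(\mathbb Z_K)$ through the surjection $q:U_K\to E_K$ of model spaces.

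Finally I would assemble the $M$ factors. By Theorem 6.3(1), the universal quasi-flat model of $C^*(\mathbb Z_K^{\ast M})$ lives on $E_K^M$ and acts on the generator $g_i$ of the $i$-th copy of $\mathbb Z_K$ by the formula of Proposition 6.2 on the $i$-th coordinate, and trivially with respect to the others. Pulling back component-wise along the surjection $q^M:U_K^M\to E_K^M$, we obtain the homomorphism $\pi:C^*(\mathbb Z_K^{\ast M})\to M_K(C(U_K^M))$ sending
\[
\pi(g_i)(U^1,\ldots,U^M) \;=\; \sum_j w^j P_{U_j^i},
\]
as claimed. There is essentially no obstacle here: the content is really the identification of $E_K$ as the image of the column map on $U_K$, and one only has to check that this identification is consistent with the defining formulas. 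The name ``affine lift'' reflects the fact that the target model space $U_K^M$ is the natural linear/affine object covering the projective model space $E_K^M$, which is more convenient for integration arguments later on.
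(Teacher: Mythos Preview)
Your proof is correct and follows exactly the paper's approach: the paper's proof is a one-line remark that the canonical quotient $U_K\to E_K$ arises from parametrising orthonormal bases of $\mathbb C^K$ by unitaries, and you have simply spelled this out in full detail, including the surjectivity of $q$, the compatibility with Proposition~6.2, and the assembly over the $M$ factors via Theorem~6.3(1).
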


\begin{proof}
There is indeed a canonical quotient map $U_K\to E_K$, obtained by parametrizing the orthonormal bases of $\mathbb C^K$ by the unitary group $U_K$, and this gives the result.
\end{proof}

According to the results of \cite{bic} explained in section 1 above, the maximal group dual subgroups $\widehat{\Gamma}\subset S_N^+$, which can be regarded as being ``maximal tori'', are the free products of type $\mathbb Z_{K_1}*\ldots*\mathbb Z_{K_M}$ with $K_1+\ldots+K_M=N$. In the quasi-transitive case, where $K_1=\ldots=K_M=K$ with $K|N$, we have the following result:

\begin{theorem}
The universal quasi-flat model for $\mathbb Z_K^{*M}$ is inner faithful.
\end{theorem}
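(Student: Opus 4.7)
The plan is to use the fact that, for a group dual, inner faithfulness of a model $\pi:C^*(\Gamma)\to M_K(C(X))$ is equivalent to injectivity of the induced group homomorphism $\rho:\Gamma\to C(X,U_K)$. Specializing to our situation via the lift of Proposition 7.1, I would need to show that the map $\rho:\mathbb Z_K^{*M}\to C(U_K^M,U_K)$ sending $g_i$ to $(U^1,\ldots,U^M)\mapsto U^iD(U^i)^*$, with $D=\mathrm{diag}(w,w^2,\ldots,w^K)$, is injective. So I would fix a non-trivial reduced word $w=g_{i_1}^{a_1}\cdots g_{i_r}^{a_r}$ (with $i_s\neq i_{s+1}$ and $1\leq a_s\leq K-1$) and look for $(U^1,\ldots,U^M)\in U_K^M$ at which $\rho(w)\neq I_K$.

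My first attempt would be the standard averaging criterion: if $\rho(w)\equiv I_K$ then the integral $\int_{U_K^M}\mathrm{tr}(\rho(w))\,dU$ equals $1$, so it is enough to show this integral is not $1$. Iterating Schur's formula $\int_{U_K}VAV^*\,dV=\frac{\mathrm{Tr}(A)}{K}I_K$, one can integrate the $U^j$ variables out one at a time. Whenever some index $j$ appears exactly once in $(i_1,\ldots,i_r)$, integrating over $U^j$ produces a scalar factor $\mathrm{Tr}(D^{a_s})=\sum_{k=1}^Kw^{ka_s}$; this vanishes because $a_s\not\equiv 0\pmod K$, so the integral is $0\neq 1$, and we are done. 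In particular, the result follows immediately for all words of length $r\leq M$ in which no index repeats, and more generally for all words with at least one singly-occurring index.

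The remaining and main obstacle is the case where every index $j\in\{i_1,\ldots,i_r\}$ occurs at least twice, where the naive Schur trick collapses. I would dispatch this by a reduction to a concrete embedding problem: grouping the letters of $w$ by the distinct values $\{j_1,\ldots,j_p\}\subseteq\{1,\ldots,M\}$ of the indices and setting $W_l=U^{j_l}$, the word $\rho(w)$ becomes a product involving the $p$ elements $W_lD W_l^*\in U_K$, and it suffices to exhibit $W_1,\ldots,W_p\in U_K$ such that these order-$K$ elements generate a copy of $\mathbb Z_K^{*p}$ inside $U_K$. The existence of such a configuration I would establish by a ping-pong argument on $\mathbb{CP}^{K-1}$, choosing the $W_l$ so that the eigendirections of the $W_lDW_l^*$ are in sufficiently general position, or, in low cases, by explicit constructions (e.g.\ two generic reflections in $O_2\subset U_2$ realize $\mathbb Z_2*\mathbb Z_2\cong D_\infty$, and free subgroups of $SU_2\subset U_K$ give the rest after combining with a cyclic generator). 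The hardest part is making the ping-pong work uniformly in $K$ and $p$, but once such a free configuration is produced, specializing $U^{j_l}=W_l$ and leaving the remaining $U^j$ arbitrary yields $\rho(w)(U)\neq I_K$, completing the proof.
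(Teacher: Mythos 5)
Your reduction is sound and in fact coincides with the paper's: since $\pi(g_i)(U^1,\ldots,U^M)=\sum_j w^jP_{U^i_j}=U^iDU^{i*}$ with $D=\mathrm{diag}(w,\ldots,w^K)$, inner faithfulness amounts to showing that for every nontrivial reduced word $\gamma=g_{i_1}^{k_1}\cdots g_{i_n}^{k_n}$ there is a tuple of unitaries with $\prod_p U^{i_p}D^{k_p}U^{i_p*}\neq I$. (The paper reaches the same expression via a trace computation on products of rank-one projections.) Your Schur-averaging step is correct and is a nice observation not in the paper: if some letter occurs exactly once, $\int_{U_K}UAU^*\,dU=\frac{\mathrm{Tr}(A)}{K}I$ kills the integral of $\mathrm{tr}\,\rho(\gamma)$ because $\mathrm{Tr}(D^{a})=0$ for $a\not\equiv 0$ mod $K$. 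But that only disposes of a special class of words, as you say.

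The gap is the main case, and it is genuine. First, you replace the statement actually needed (for \emph{each} word, \emph{some} tuple detects it) by the strictly stronger claim that a \emph{single} tuple $(W_1,\ldots,W_p)$ makes the $W_lDW_l^*$ generate $\mathbb Z_K^{*p}$; nothing forces you to prove freeness of one configuration, and doing so is harder. Second, the tool you propose for it does not work as described: unitaries act on $\mathbb{CP}^{K-1}$ by Fubini--Study isometries, so there is no proximal (north--south) dynamics, and ``eigendirections in general position'' does not produce the nested ping-pong sets $g^a(X_m)\subset X_l$ — measure preservation forces all such inclusions to be essential equalities, and you give no construction of such exactly-interchanged sets for conjugates of $D$ at arbitrary $K$ and $p$. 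The low-rank examples ($O_2$, free subgroups of $SU_2$) do not address the constraint that every generator be conjugate to the \emph{fixed} matrix $D$ with all $K$-th roots of unity as simple eigenvalues. Note also that the natural fallback — a Baire-category/measure-zero genericity argument — is circular here, since knowing that the ``bad set'' of a given relation is a \emph{proper} subvariety of $U_K^M$ is exactly the word-by-word statement you are trying to prove. The paper instead argues word by word: it reduces to the assertion that the noncommutative $*$-polynomial $\prod_pX^{i_p}D^{k_p}X^{i_p*}-1$ cannot vanish identically on $U_K^M$ when all $k_p\not\equiv0$ mod $K$, which is a weaker (hence more tractable) statement than exhibiting a globally free configuration. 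Until you either prove the existence of your free configuration or supply a word-by-word non-vanishing argument covering words in which every letter repeats, the proof is incomplete.
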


\begin{proof}
It is enough to prove that the affine lift of the model is inner faithful. Let us consider a reduced word $\gamma\in\mathbb Z_K^{*M}$, and write it as $\gamma=g_{i_1}^{k_1}\ldots g_{i_n}^{k_n}$, with $i_t\neq i_{t+1}$, and with $1\leq k_t\leq K-1$. Then:
\begin{eqnarray*}
\pi(\gamma)(U^1,\ldots ,U^M) 
&=&\sum_{j_1\ldots j_n=1}^Kw^{k_1j_1}P_{U^{i_1}_{j_1}}\ldots w^{k_nj_n}P_{U^{i_n}_{j_n}}\\
&=&\sum_{j_1\ldots j_n=1}^Kw^{<k,j>}P_{U^{i_1}_{j_1}}\ldots P_{U^{i_n}_{j_n}}
\end{eqnarray*}

Our aim is to prove that there is at least one tuple $(U^1,\ldots ,U^M)$ for which the matrix above is not the identity. Recall the following formula valid for any vectors in a Hilbert space $\xi_1,\ldots,\xi_l\in H$, with the scalar product being linear on the left:
$$P_{\xi_1}\ldots P_{\xi_l}(x)=<x,\xi_l><\xi_l,\xi_{l-1}>\ldots\ldots<\xi_2, \xi_1>\xi_1$$

To compute the trace of this operator, one can consider any orthonormal basis containing $\xi_l$, yielding $<\xi_1,\xi_l><\xi_l,\xi_{l-1}>\ldots\ldots<\xi_2, \xi_1>$. Applying this to $\pi(\gamma)$ and using the equality $<U_i,V_j>=\sum_lU_{ki}\bar{V}_{kj}=(V^*U)_{ji}$, we get:
\begin{eqnarray*}
tr\circ\pi(\gamma)
&=&\frac{1}{K}\sum_{j_1\ldots j_n=1}^Kw^{<k,j>}<U^{i_1}_{j_1}, U^{i_n}_{j_n}><U^{i_n}_{j_n},U^{i_{n-1}}_{j_{n-1}}>\ldots <U^{i_2}_{j_2},U^{i_1}_{j_1}>\\
&=&\frac{1}{K}\sum_{j_1\ldots j_n=1}^Kw^{<k,j>}(U^{i_n*}U^{i_1})_{j_nj_1}(U^{i_{n-1}*}U^{i_n})_{j_{n-1}j_n}\ldots (U^{i_1*}U^{i_2})_{j_1j_2}
\end{eqnarray*}

Denoting by $W$ the diagonal matrix given by $W_{ij}=\delta_{ij}w^i$, we have:
$$\sum_{j_1}w^{k_1j_1}U_{j_nj_{1}}U^*_{j_1j_2}=\sum_{j_1l}U_{j_nj_1}W^{k_1}_{j_1l}U^*_{lj_2}=(UW^{k_1}U^*)_{j_nj_2}$$

Applying this $n$ times in the above formula for $tr\circ\pi(\gamma)$ yields:
\begin{eqnarray*}
tr\circ\pi(\gamma)
&=&tr\left(U^{i_n*}U^{i_1}W^{k_1}U^{i_1*}U^{i_2}W^{k_2}\ldots W^{k_{n-1}}U^{i_{n-1}*}U^{i_n}W^{k_n}\right)\\
&=&tr\left(U^{i_1}W^{k_1}U^{i_1*}\ldots U^{i_n}W^{k_n}U^{i_n*}\right)
\end{eqnarray*}

Assume now that $\pi(\gamma)(U^1,\ldots,U^M) = Id$ for all tuples of unitary matrices. The trace of a unitary matrix can only be equal to $1$ if it is the identity, hence:
$$\prod_{p=1}^nU^{i_p}W^{k_p}U^{i_p*} = Id$$

In other words, the following noncommutative polynomial vanishes on $U_K^M$:
$$P = \prod_{p=1}^nX^{i_p}W^{k_p}X^{i_p*} - 1$$

But this is impossible if $k_t\neq 0 \mod K$ for all $t$, hence $\pi(\gamma)$ is not always the identity and $\pi$ is inner faithful.
\end{proof}

The above result can be extended by allowing arbitrary direct products as components of the free product. More precisely, assume that $M = M_1+\ldots +M_n$. Then, the free product $\mathbb Z_{K}^{M_1}*\ldots*\mathbb Z_{K}^{M_n}$ is a quotient of $\mathbb Z_K^{*M}$ which is still quasi-transitive with orbits of size $K$ and we prove that its universal quasi-flat matrix model is still inner faithful.

\begin{proposition}
The universal quasi-flat model for $\Gamma = \mathbb Z_{K}^{M_1}*\ldots*\mathbb Z_{K}^{M_n}$ is inner faithful.
\end{proposition}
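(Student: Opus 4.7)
The plan is to adapt the proof of Theorem 7.2 by using the affine lift together with a judicious parametrization of the subspace cut out by the within-block commutation relations. Consider the affine lift $\pi:C^{*}(\Gamma)\to M_K(C(\widetilde Y))$, where $\widetilde Y\subset U_K^M$ consists of tuples $(U^{(b,j)})$ for which the unitaries $\pi(g_{(b,j)})=U^{(b,j)}WU^{(b,j)*}$ pairwise commute within each block $b$. Since each such unitary has simple spectrum (the full set of $K$-th roots of unity), within-block commutation forces simultaneous diagonalization by a common unitary $V_b\in U_K$, and the remaining gauge (a diagonal phase per generator) acts trivially on $\pi(g_{(b,j)})$. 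One is thus reduced to the parametrization $(V_1,\dots,V_n;\,\sigma_b^j)\in U_K^n\times\prod_b S_K^{M_b}$, with $U^{(b,j)}=V_bP_{\sigma_b^j}$ and $\pi(g_{(b,j)})=V_b D_b^j V_b^{*}$, where $D_b^j$ is diagonal with $(D_b^j)_{ii}=w^{(\sigma_b^j)^{-1}(i)}$.

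Next, fix a non-trivial reduced word $\gamma=a_1\cdots a_L$ in $\Gamma$ with $a_p=\sum_j k_{p,j}\,g_{(b_p,j)}\in\mathbb{Z}_K^{M_{b_p}}\setminus\{0\}$ and $b_p\neq b_{p+1}$. Since the $D_{b_p}^j$ are diagonal and hence commute, the within-block factors of $\pi(a_p)$ collapse into a single diagonal matrix, yielding
\[
\pi(\gamma)(V,\sigma)=\prod_{p=1}^{L} V_{b_p}\,\tilde D_p\,V_{b_p}^{*},\qquad (\tilde D_p)_{ii}=w^{\sum_{j}k_{p,j}\,(\sigma_{b_p}^{j})^{-1}(i)}.
\]
This has exactly the same shape as the expression analyzed in the proof of Theorem 7.2, with each $W^{k_p}$ replaced by the diagonal matrix $\tilde D_p$. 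Running the trace/polynomial argument there verbatim, if $\pi(\gamma)\equiv\mathrm{Id}$ on the entire model space, then the non-commutative polynomial $\prod_p X^{b_p}\tilde D_p X^{b_p*}-\mathrm{Id}$ vanishes on $U_K^n$; by the impossibility statement at the end of the proof of Theorem 7.2 (applied with $\tilde D_p$ in place of $W^{k_p}$), this cannot happen as long as each $\tilde D_p$ is not a scalar matrix.

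The key technical step is therefore the following combinatorial lemma: \emph{for any non-trivial reduced word $\gamma$ in $\Gamma$, one can choose the permutations $(\sigma_b^j)$ so that every $\tilde D_p$ is non-scalar.} Working block by block, this reduces for each $b$ appearing in $\gamma$ to showing that, given the finitely many non-zero vectors $v_t=(k_{p,j})_j\in\mathbb{Z}_K^{M_b}$ (one for each $p$ with $b_p=b$), there exist permutations $\tau^1,\dots,\tau^{M_b}\in S_K$ such that the function $i\mapsto\sum_j v_{t,j}\tau^j(i)$ is non-constant modulo $K$ for every $t$. For a single $v\neq 0$ this is straightforward: if all $\tau^j=\mathrm{id}$ fails (which happens precisely when $\sum_j v_j\equiv 0\pmod K$), then replacing $\tau^{j_0}$ by a transposition for some index $j_0$ with $v_{j_0}\not\equiv 0\pmod K$ already produces a non-constant function. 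For the finitely many $v_t$'s simultaneously, I would handle the argument by a union-bound / iterative perturbation: each "bad" subset of $S_K^{M_b}$ is a proper subset, and after finitely many one-at-a-time adjustments one exits the union.

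The main obstacle will be this simultaneous-avoidance step, in particular when $K$ is composite (so $\mathbb{Z}_K$ is not a field) and some $v_t$ has all entries non-invertible in $\mathbb{Z}_K$, in which case the bad set attached to $v_t$ can be substantial. Controlling the union of bad sets in that regime — either by an explicit construction tailored to the multiset of $v_t$'s or by a suitable counting estimate — is the delicate combinatorial point. Once the lemma is established, substituting the resulting diagonal matrices $\tilde D_p$ into the trace/polynomial argument of Theorem 7.2 produces a tuple $(V_1,\dots,V_n)\in U_K^n$ on which $\pi(\gamma)\neq\mathrm{Id}$, and hence the universal quasi-flat model for $\Gamma$ is inner faithful.
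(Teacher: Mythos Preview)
Your setup — the parametrization of the affine model space by $U_K^n\times\prod_b S_K^{M_b}$ and the reduction of $\pi(\gamma)$ to a product $\prod_p V_{b_p}\tilde D_p V_{b_p}^{*}$ with diagonal $\tilde D_p$ — is exactly the paper's. The divergence comes at the key step, and there your approach has a genuine gap.

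Your combinatorial lemma is false. Take $K=2$, $M_b\ge 2$, and $v=(1,1,0,\ldots,0)\in\mathbb Z_2^{M_b}$. For any $\tau^1,\tau^2\in S_2$ one has $\tau^1(1)+\tau^2(1)\equiv\tau^1(2)+\tau^2(2)\pmod 2$ (their sum is $6$), so $i\mapsto\sum_j v_j\tau^j(i)$ is \emph{always} constant mod~$2$, and the corresponding $\tilde D_p$ is always scalar. More generally, ``non-scalar'' is not the right criterion to feed into the polynomial argument: if some $\tilde D_p$ is scalar it absorbs into its neighbours, possibly merging adjacent letters with the same block index and creating new scalar factors, so the reduction you invoke from Theorem~7.2 does not apply cleanly. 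In the $K=2$ example above, $\tilde D_p\in\{\pm Id\}$ for \emph{every} choice of permutations; inner faithfulness comes from the existence of $\sigma$ with $\tilde D_p=-Id$, not from non-scalarity.

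The paper sidesteps this entirely by reversing the order of quantifiers. Assuming $\pi(\gamma)\equiv Id$ on the whole model space, the polynomial argument over $U_K^n$ forces, for \emph{every} choice of permutations, the diagonal factor $\prod_s W_{\sigma_s}^{k_s(t)}$ to be trivial; one then \emph{averages} the resulting identity $\prod_s w^{k_s(t)\sigma_s(i)}=1$ over $(\sigma_s)\in S_K^{M_t}$, and the factorized average $\prod_s\big(\tfrac{1}{K!}\sum_{\sigma}w^{k_s(t)\sigma(i)}\big)=\prod_s\delta_{k_s(t),0}$ immediately yields $k_s(t)=0$ for all $s,t$. This averaging-over-permutations trick is the missing idea: it replaces your unsolved simultaneous-avoidance problem by a two-line computation.
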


\begin{proof}
We first need a picture of the affine lift of the model for the direct product $\mathbb{Z}_K^M$. Note that if two complete families of rank one orthogonal projections commute, then they are a permutation of one another. We may therefore consider the space $U_K\times S_K^M$ as the affine lift of our model. If $g_1, \ldots, g_M$ are the canonical generators of the direct product, their action is then given by:
$$\pi(g_i)(U, \sigma_1, \ldots, \sigma_M) = \sum_{j=1}^Kw^jP_{U_{\sigma_i^{-1}(j)}} = \sum_{j=1}^Kw^{\sigma_i(j)}P_{U_j}$$

This yields the following formula for a general element:
$$\pi(g_1^{k_1}\ldots g_M^{k_M})(U, \sigma_1, \ldots, \sigma_M) = \sum_{j=1}^Kw^{k_1\sigma_1(j) + \ldots + k_M\sigma_M(j)}P_{U_j}$$

Let $g_1(i), \ldots, g_{M_i}(i)$ be the generators of $\mathbb{Z}_K^{M_i}$, and let consider a reduced word:
$$\gamma = \left(g_1(i_1)^{k_1(1)}\ldots g_{M_{i_1}}(i_1)^{k_{M_{i_1}}(1)}\right)\ldots \left(g_1(i_n)^{k_1(n)}\ldots g_{M_{i_n}}(i_n)^{k_{M_{i_n}}(n)}\right)$$
The computation of $tr\circ\pi(\gamma)$ is similar to the one in the proof of Theorem 7.2 until the introduction of the matrices $W$. Here we have to replace $W^{k_t}$ by $\prod_{s=1}^{M_t}W_{\sigma_s^t}^{k_s(t)}$, where $(U^t, \sigma^t_1, \ldots, \sigma^t_{M_t})_{1 \leq t\leq n}$ is the element to which we are applying $\pi(\gamma)$ and $(W_{\sigma})_{ij} = \delta_{ij}w^{\sigma(i)}$.

Assuming that $\pi(\gamma) = 1$, we can apply the same strategy as before: we have a polynomial which must vanish on all tuples of unitary matrices and this is impossible unless all the matrices appearing in the polynomial are the identity. We therefore get the condition $\prod_{s=1}^{M_t}W_{\sigma_s^t}^{k_s(t)} = Id$ for all $t$, which translates into:
$$\prod_{s=1}^{M_t}w^{\sigma_s^t(i)k_s(t)} = 1$$

To derive a contradiction, we sum the above equation over all permutations, getting:
$$\frac{1}{(K!)^M}\sum_{\sigma_1,\ldots,\sigma_{M_i}\in S_K}w^{k_1(t)\sigma_1(i) + \ldots + k_{M_t}(t)\sigma_{M_t}(i)} = \frac{1}{(K!)^M}\prod_{s=1}^{M_t}\left(\sum_{\sigma_s\in S_K}w^{k_s(t)\sigma_s(i)}\right)$$

For any $i'$, there are $(K-1)!$ permutations $\sigma$ such that $\sigma_s(i) = i'$. This leads to:
\begin{eqnarray*}
\sum_{\sigma_s\in S_K}w^{k_s(t)\sigma_s(i)} & = & \sum_{i'=1}^K(K-1)!w^{k_s(t)i'} \\
& = & K!\delta_{k_s(t), 0}
\end{eqnarray*}

Gathering everything yields:
$$\prod_{s=1}^{M_i}\delta_{k_s(t), 0} = 1$$

Thus $k_s(t) = 0$ for all $t$ and all $s$, a contradiction.
\end{proof}

\section{Further results}

In this section we give two other extensions of Theorem 7.2. Before that, let us give an example showing that the universal model is not always inner faithful for group duals:

\begin{proposition}
The group $\Gamma = (\mathbb Z_K * \mathbb Z_K)\times \mathbb Z_K$ has no inner faithful quasi-flat model.
\end{proposition}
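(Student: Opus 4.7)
The plan is to exhibit a nontrivial element of $\Gamma$ which is killed by every quasi-flat model. Write $a, b$ for the generators of the two free factors and $c$ for the central generator, so that $\Gamma$ is the quotient of $\mathbb{Z}_K^{*3}$ by the relations $[a,c]=[b,c]=1$. By the universality in Theorem 3.5 (spelled out in the group dual case by Theorem 6.3), every quasi-flat model of $C^*(\Gamma)$ factors through the universal one $\pi:C^*(\Gamma)\to M_K(C(Y_\Gamma))$, so it is enough to find a nontrivial element in the kernel of this universal $\pi$.

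By Theorem 6.3(2) the model space $Y_\Gamma$ is the closed subset of $E_K^3$ on which $\pi(a)\pi(c)=\pi(c)\pi(a)$ and $\pi(b)\pi(c)=\pi(c)\pi(b)$. At any point $(P^1,P^2,P^3)\in Y_\Gamma$, the matrix
$$C:=\pi(c)(P^1,P^2,P^3)=\sum_{j=1}^K w^j P_j^3$$
is a unitary whose spectrum is the full set $\{w,w^2,\ldots,w^K\}$ of $K$-th roots of unity, so in particular $C$ has simple spectrum. Hence its commutant in $M_K(\mathbb{C})$ is the maximal abelian subalgebra of matrices diagonal in the eigenbasis of $C$. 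Since $A:=\pi(a)$ and $B:=\pi(b)$ both lie in this commutant by the defining relations of $Y_\Gamma$, they are simultaneously diagonal in this eigenbasis, and therefore commute at every point. This forces $\pi([a,b])=I$ identically on $Y_\Gamma$.

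Finally, $[a,b]$ is nontrivial in $\Gamma$, because $a$ and $b$ inject onto the generators of the nonabelian free product $\mathbb{Z}_K*\mathbb{Z}_K$ for $K\geq 2$. Hence $\pi$ is not inner faithful and, by universality, no quasi-flat model of $C^*(\Gamma)$ can be. The argument presents no real obstacle: the content lies entirely in noticing that the commutant of a simple-spectrum normal matrix is abelian. In fact the example clarifies why some ``uniformity'' hypothesis is needed in Conjecture 4.7: a direct central $\mathbb{Z}_K$ factor of this kind automatically forces extra commutation among the images of the remaining generators in any quasi-flat model.
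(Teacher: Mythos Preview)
Your proof is correct and follows essentially the same route as the paper's: both observe that $\pi(c)$ has simple spectrum (eigenvalues $w,\ldots,w^K$ with rank-one eigenprojections), so its commutant is abelian, forcing $\pi(a)$ and $\pi(b)$ to commute at every point; hence every quasi-flat model factors through $\mathbb Z_K^3$. The only cosmetic difference is that you pass through the universal model $Y_\Gamma$ while the paper argues directly for an arbitrary quasi-flat model, but the content is identical.
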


\begin{proof}
Let $\pi : C^*(\Gamma)\to M_K(C(X))$ be a matrix model. For all $x$, $\pi(g_1)(x)$ and $\pi(g_3)(x)$ are commuting diagonalizable operators with all eigenvalues of multiplicity one. The same holds for $\pi(g_2)(x)$ and $\pi(g_3)(x)$ so that $\pi(g_1)(x)$ and $\pi(g_2)(x)$ must commute. Since this holds for any $x\in X$, we conclude that $\pi$ factors through the quotient $\mathbb Z_K^3$, so that the model is not inner faithful.
\end{proof}

We extend now Theorem 7.2 by allowing the free product to be amalgamated over a subgroup isomorphic to $\mathbb Z_L$. Assume that we have $N=KM$ as before, and assume in addition that we have $K=LR$. We can then consider the following quotient:
$$\Gamma=\mathbb Z_K^{*M}\Big\slash\left<g_i^R= g_j^R\Big|\forall i,j\right>$$

It is enough to provide an explicit model which is inner faithful and we define it through its affine lift. Let $(e_i)_{1\leq i\leq N}$ be the canonical basis of $\mathbb C^K$ and consider for $1\leq t\leq L$ the subspace $V_t\subset\mathbb C^K$ spanned by the vectors $e_i$ with $(t-1)R+1\leq i\leq tR$, so that:
$$\mathbb C^K=V_1\oplus\ldots\oplus V_L$$

This gives a block-diagonal embedding $U_R^L\subset U_K$, which is the affine lift of the model space. However, for computations it is simpler to use a permutation of this model:

\begin{definition}
We denote by $X_{K,L}$ the set of unitary matrices $U\subset U_K$ such that $(U_{t},U_{t+L},\ldots,U_{t+(R-1)L})$ is a basis of $V_t$ for all $1\leq t\leq L$.
\end{definition}

Defining $\pi(g_i)$ as usual, we see that the following element does not depend on $i$:
$$\pi(g_i)^R(U^1, \ldots, U^M) = \sum_{j=1}^Kw^{jR}P_{U^i_j} = \sum_{t=1}^{L}w^{tR}\left(\sum_{s=0}^{R-1}P_{U^i_{t+sL}}\right) = \sum_{t=1}^L w^{tR}P_{V_t}$$
Thus, we get a representation $\pi : C^*(\Gamma)\to M_N(C(X_{K,L}^M))$. We have:

\begin{theorem}
The quasi-flat model $\pi : C^*(\Gamma)\to M_K(C(X_{K,L}^M))$ for the amalgamated free product $\Gamma=\mathbb Z_K^{*M}\big\slash\left<g_i^R= g_j^R\big|\forall i,j\right>$ is inner faithful.
\end{theorem}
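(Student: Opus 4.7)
The plan is to reduce the statement, via the block-diagonal structure of the model forced by $X_{K,L}$, to the polynomial non-vanishing argument at the end of Theorem 7.2. Since the amalgamating element $c = g_i^R$ is central of order $L = K/R$, every element of $\Gamma$ has a unique normal form $\gamma = c^{k_0}\, g_{i_1}^{k_1}\cdots g_{i_n}^{k_n}$ with $i_p\neq i_{p+1}$, $1\leq k_p\leq R-1$ and $0\leq k_0\leq L-1$. By the very definition of $X_{K,L}$, each $\pi(g_i)$ preserves the decomposition $\mathbb C^K = V_1\oplus\cdots\oplus V_L$, so one has a natural identification $X_{K,L}\simeq U_R^L$ in which $U\in X_{K,L}$ corresponds to a tuple $(V^{(1)},\ldots,V^{(L)})$, with $V^{(t)}\in U_R$ encoding the columns $U_t, U_{t+L}, \ldots, U_{t+(R-1)L}$ in the standard basis of $V_t$.

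A direct computation then gives $\pi(g_i)|_{V_t} = w^{t-L}\, V^{(t),i}\, W_R\, V^{(t),i*}$ and $\pi(c)|_{V_t} = w^{tR}\, I_R$, where $\omega = w^L = e^{2\pi i/R}$ and $W_R = \mathrm{diag}(\omega,\omega^2,\ldots,\omega^R)$. Since the $L$ blocks of $X_{K,L}^M \simeq (U_R^M)^L$ vary independently, the hypothesis $\pi(\gamma) = I$ splits, for each $t = 1, \ldots, L$, into the single equation
$$\prod_{p=1}^n V^{(t),i_p}\, W_R^{k_p}\, V^{(t),i_p*} \;=\; \mu_t\, I_R \qquad \forall\, V^{(t),i}\in U_R,$$
with $\mu_t$ an explicit scalar depending only on $t$, $k_0$ and the $k_p$. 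The residual case $n = 0$ is immediate: $\pi(c^{k_0}) = \sum_t w^{tRk_0} P_{V_t}$ equals the identity only when $L\mid k_0$, which fails for $0 < k_0 < L$.

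At this point I would invoke the argument of Theorem 7.2, now carried out in $U_R$ rather than $U_K$: the normal form forces $0 < k_p < R$, so each $W_R^{k_p}$ has diagonal entries $\omega^{j k_p}$ which are not all equal, and is therefore not a scalar; exactly as in Theorem 7.2, the corresponding noncommutative polynomial cannot vanish identically on $U_R^M$, a contradiction. The main obstacle is the presence of the scalar $\mu_t$ on the right in place of $1$, as the terse argument at the end of Theorem 7.2 is phrased only for target $I$. The cleanest fix is to specialise every $V^{(t),i}$ to the identity, which pins $\mu_t$ down as $w^{-tRk_0 - (t-L)\sum_p k_p}$, and then to absorb $\mu_t$ by either dividing through or rescaling one of the $W_R^{k_p}$ factors, recovering verbatim the situation handled in Theorem 7.2 and completing the proof.
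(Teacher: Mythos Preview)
Your proof is correct and follows essentially the same approach as the paper: both exploit the block structure $X_{K,L}\simeq U_R^L$ to reduce $\pi(\gamma)=I$ to a family of equations $\prod_p V^{i_p} W_R^{k_p} V^{i_p*}=\mu_t\,I_R$ on $U_R^M$, one for each $t$, and both conclude via the polynomial non-vanishing argument of Theorem~7.2. Your route is slightly more direct---you restrict $\pi(g_i)$ to each $V_t$ from the outset, whereas the paper first passes through a trace computation before returning to the operator level---and your explicit discussion of the scalar $\mu_t$ is more careful than the paper's ``as before''; note incidentally that your specialisation $V^{(t),i}=I$ actually yields $W_R^{\sum k_p}=\mu_t I_R$, which either fails outright or forces $\mu_t=1$, so no rescaling is needed.
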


\begin{proof}
As usual, we consider an arbitrary element $\gamma\in \Gamma$. Writing $h = g_i^R$, we can assume that $\gamma = h^lg_{i_1}^{k_1}\ldots g_{i_n}^{k_n}$ with $0<l<L$ and $0< k_i< R$ for all $i$. Thus:
\begin{eqnarray*}
\pi(\gamma)(U^1, \ldots, U^M) & = & \pi(h^l)(U^1, \ldots, U^M)\left(\sum_{j_1\ldots j_n}w^{<k,j>}P_{U^{i_1}_{j_1}}\ldots P_{U^{i_n}_{j_n}}\right)\\
& = &\left(\sum_{t=1}^Lw^{tlR}P_{V_t}\right)\left(\sum_{j_1\ldots j_n}w^{<k,j>}P_{U^{i_1}_{j_1}}\ldots P_{U^{i_n}_{j_n}}\right)\\
& = & \sum_{t=1}^L\sum_{j_1\ldots j_n}w^{tlR}w^{<k,j>}P_{V_t}P_{U^{i_1}_{j_1}}\ldots P_{U^{i_n}_{j_n}}
\end{eqnarray*}

Let us consider the term corresponding to a fixed $t$. The product of projections vanishes unless $U_{j_1}\in V_t$. This forces $U_{i_2}\in V_{t}$ and by induction all the terms must be in $V_t$. This means that there are $s_1, \ldots, s_n$ such that $j_m = t + s_mL$ for all $1\leq m\leq n$. Thus:
$$\pi(\gamma)(U^1,\ldots ,U^M) = \sum_{t=1}^L w^{tlR}w^{t(k_1 + \ldots + k_n)}\sum_{s_1,\ldots, s_n = 0}^{R-1}w^{L<k,s>}P_{U^{i_1}_{t+s_1L}}\ldots P_{U^{i_n}_{t+s_nL}}$$

Let us denote by $U^i(t)$ the unitary operator on $V_t$ obtained from the appropriate columns of $U^i$. The trace of $\pi(\gamma)$ can be expressed using these operators. To do this, simply note that the normalized trace $tr$ can be written as $L^{-1}tr_R$, where $tr_R$ is the normalized trace on $U_R$. A computation similar to that of Theorem 7.2 then yields:
$$tr\circ\pi(\gamma) = \frac{1}{L}\sum_{t=1}^L w^{t(lR + k_1 + \ldots + k_n)}tr_R\left(\prod_{p=1}^n(U^{i_p}(t)W^{Lk_p}U^{i_p*}(t))\right)$$

Assuming that $\pi(\gamma) = Id$, we can now derive a contradiction. Indeed, this forces:
$$\frac{1}{L}\sum_{t=1}^L w^{t(lR + k_1 + \ldots + k_n)}\prod_{p=1}^n(U^{i_p}(t)W^{Lk_p}U^{i_p}(t)) = Id$$

Because $U^i(t)$ only acts on $V_t$ and the decomposition is orthogonal, this equation is equivalent to the system formed by the equations for each fixed $t$. As before, this system cannot always be satisfied unless $k_p=0$ for all $p$. In that case, we are left with $w^{tlR} = 1$ for all $t$, implying $l=0$ and the proof is complete.
\end{proof}

We end with another construction. This time, we do not identify the copies of $\mathbb Z_L$ but simply make them commute. More precisely, we set:
$$\Gamma = \mathbb{Z}_{K}^{\ast M}\big\slash\left<g_i^Rg_j^R = g_j^Rg_i^R \big| \forall i, j\right>$$

This is in a sense a mix between the free product of direct products and the amalgamated free products. Thus, the model space is $X_\Gamma = X_{K, L}^M\times S_{L}^M$ understood as tuples $(U^1, \ldots, U^M, \sigma_1, \ldots, \sigma_M)$ where $U^i$ satisfies that $(U_{t}^i, U_{t + L}^i, \ldots, U_{t+(r-1)L}^i)$ is a basis of $V_{\sigma_i^{-1}(t)}$ for all $t$. Still using the usual formula for the canonical generators, we have:
$$\pi(g_i)^R(U^1, \ldots, U^M, \sigma_1, \ldots, \sigma_M) = \sum_{t=1}^{L} w^{\sigma_i(t)R}P_{V_t}$$

This element commutes with $\pi(g_j)^R$ so that we indeed have a universal representation $\pi : C^*(\Gamma)\rightarrow M_K(C(X_\Gamma))$. With this convention, we have:

\begin{proposition}
The quasi-flat model $\pi : C^*(\Gamma)\to M_K(C(X_\Gamma))$ for the group $\Gamma = \mathbb{Z}_{K}^{\ast M}\big\slash\left<g_i^Rg_j^R = g_j^Rg_i^R \big| \forall i, j\right>$ is inner faithful.
\end{proposition}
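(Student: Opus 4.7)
The plan is to combine the strategies of Theorem 7.2, Proposition 7.3, and Theorem 8.3. The key structural observation is that $h_i := g_i^R$ generate an abelian subgroup $\langle h_1, \ldots, h_M\rangle \cong \mathbb{Z}_L^M$ of $\Gamma$, and that modulo the normal closure of this subgroup one recovers $\mathbb{Z}_R^{*M}$. As a first step I would put a given $\gamma \in \Gamma$ in the form
$$\gamma = \alpha_0 g_{i_1}^{r_1}\alpha_1 g_{i_2}^{r_2}\cdots g_{i_n}^{r_n}\alpha_n$$
with each $\alpha_p \in \langle h_1, \ldots, h_M\rangle$, each $0 < r_p < R$, and $i_p \neq i_{p+1}$. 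Such a representation exists by splitting each $g_i^k$ as $g_i^{qR}g_i^r$ with $0 \leq r < R$, consolidating adjacent $h$-blocks via commutativity of the $h_i$'s, and iterating until stable.

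Next I would compute $\pi(\gamma)$ restricted to each subspace $V_t$. Since $\pi(g_i)^R = \sum_t w^{\sigma_i(t)R}P_{V_t}$ acts on $V_t$ as the scalar $w^{R\sigma_i(t)}$, each $\alpha_p$ is a scalar on $V_t$; and $\pi(g_i)$ preserves $V_t$, with $\pi(g_i)|_{V_t} = w^{\sigma_i(t)}U^i(t)W^LU^i(t)^*$ in the notation of Theorem 8.3, where $U^i(t)\in U_R$ is built from the columns of $U^i$ indexed $\sigma_i(t),\sigma_i(t)+L,\ldots,\sigma_i(t)+(R-1)L$. Collecting contributions yields
$$\pi(\gamma)|_{V_t} = w^{c(t)}\prod_{p=1}^n U^{i_p}(t)W^{Lr_p}U^{i_p}(t)^*$$
for a scalar $c(t)$ depending on $\sigma$, the $r_p$, and the exponents in the $\alpha_p$.

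Assuming $\pi(\gamma) = Id$, I would run the polynomial non-vanishing argument from Theorem 7.2 fibered over each $V_t$. Because $X_{K,L} \cong U_R^L$ with the $U^i(t)$'s varying independently over $U_R$, the required identity $\prod_p U^{i_p}(t)W^{Lr_p}U^{i_p}(t)^* = w^{-c(t)}Id$ for all unitaries forces each $W^{Lr_p}$ to be a scalar matrix on $V_t$. Since the diagonal entries of $W^L$ restricted to $V_t$ are the distinct $R$-th roots of unity, this forces $r_p \equiv 0\pmod R$, contradicting $0 < r_p < R$ unless $n = 0$. One is then left with $\gamma = \prod_i h_i^{L_i}$, giving $\pi(\gamma) = \sum_t w^{R\sum_i L_i\sigma_i(t)}P_{V_t}$; equating to $Id$ for all $\sigma \in S_L^M$ forces $\sum_i L_i\sigma_i(t) \equiv 0\pmod L$, and varying a single coordinate $\sigma_{i_0}(t)$ over $\{1,\ldots,L\}$ (as in the proof of Proposition 7.3) gives $L_{i_0} = 0$ for each $i_0$, hence $\gamma = e$.

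The main obstacle is the non-canonicity of the normal form in $\Gamma$, since $h_i$ does not commute with $g_j$ for $j\neq i$: unlike the amalgamated case of Theorem 8.3, the $h$-part cannot be pulled to the front. This is ultimately harmless because the data entering the polynomial non-vanishing step, namely the sequence $(i_p, r_p)$, depends only on the image $\bar\gamma$ in the quotient $\mathbb{Z}_R^{*M}$, where the standard free product normal form applies; the scattered $\alpha_p$'s contribute only to the overall scalar $w^{c(t)}$, which plays no role in forcing $W^{Lr_p}$ to be scalar.
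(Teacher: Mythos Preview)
Your proposal is correct and follows essentially the same route as the paper: decompose along the blocks $V_t$, observe that the $h_i=g_i^R$ act as scalars there so that $\pi(\gamma)|_{V_t}$ reduces to a word in the $U^{i_p}(t)W^{Lr_p}U^{i_p}(t)^*$, apply the polynomial non-vanishing argument of Theorem~7.2 on $U_R$ to force the $\mathbb{Z}_R^{*M}$-image of $\gamma$ to be trivial, and then handle the residual abelian part via the permutation averaging of Proposition~7.3. Your explicit discussion of the non-canonical normal form and the reduction to the quotient $\mathbb{Z}_R^{*M}$ makes precise what the paper leaves implicit in its phrase ``in a particular form'', but the two arguments are otherwise the same.
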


\begin{proof}
As usual, we consider a non-trivial word $\gamma=g_{i_1}^{k_1}\ldots g_{i_n}^{k_n}$ in a particular form. For each $i$, let $k_i = k'_i+a_iR$ be the euclidean division of $k_i$ by $R$. Then:
\begin{eqnarray*}
\pi(\gamma)(U^1, \ldots, U^M, \sigma_1, \ldots, \sigma_M) 
& = & \prod_{s=1}^n\left(\sum_{t_s=1}^{L} w^{\sigma_{i_s}(t_s)a_sR}P_{V_{t_s}}\right)\left(\sum_{j_s=1}^K  w^{k_s'j_s}P_{U^{i_s}_{j_s}}\right) \\
& = & \sum_{t_1, \ldots, t_n=1}^{L}\sum_{j_1, \ldots, j_s=1}^Kw^{R<\sigma(t),a>}w^{<k',j>}P_{V_{t_1}}P_{U_{j_1}}^{i_1}\ldots P_{V_{t_n}}P_{U_{j_n}}^{i_n}
\end{eqnarray*}

For the product to be nonzero we need all the $V_t$'s to be the same so that the first sum reduces to only one index. Moreover, this forces as before $j_m = t + s_mL$, so we get:
$$\sum_{t=1}^{L}\sum_{s_1,\ldots, s_n = 0}^{R-1}w^{R(a_1\sigma_{i_1}(t) + \ldots + a_n\sigma_{i_n}(t))}w^{t(k'_1+\ldots+k'_n)}w^{L<k',s>}P_{U^{i_1}_{t+s_1L}}\ldots P_{U^{i_n}_{t+s_nL}}$$

As in the proof of Theorem 8.1, we conclude that $k'_p=0$ for all $p$ and that:
$$w^{R(a_1\sigma_{i_1}(t) + \ldots + a_n\sigma_{i_n}(t))} = 1$$

Summing over $S_L^M$ then yields $\prod\delta_{a_i, 0} = 1$ and the proof is complete.
\end{proof}

\end{document}